\documentclass{article}
\usepackage{amsmath, amsfonts,amsthm, amssymb,url,hyperref}
\usepackage{graphics}
\usepackage[english]{babel}

%% Section
%\titleformat{\section}{}{\Large \bf \thesection.}{0.3em}{\bfseries\Large}
%%\titlespacing{\section}{0cm}{1.cm}{0.4cm}

\newcommand{\ba}{\begin{eqnarray}}
\newcommand{\ea}{\end{eqnarray}}

 %les rÃ©Ã©els

\newtheorem{theorem}{Theorem}
\newtheorem{assumption}{Assumption}

\newtheorem{proposition}{Proposition}

\newtheorem{remark}{Remark}[section]
\newtheorem{lemma}{Lemma}

\title{Mimicking the marginal distributions of a semimartingale}
 \date{September 2009. Revised: April 2012.\footnote{Laboratoire de Probabilit\'es et
    Mod\`eles Al\'eatoires, UMR 7599 CNRS - Universit\'e Pierre \& Marie Curie (Paris VI)}}
\author{Amel Bentata\  and Rama Cont}

\begin{document}

\maketitle
\begin{abstract}
We show that the flow of marginal distributions of a discontinuous
semimartingale $X$ can be matched by a Markov process whose
infinitesimal generator is expressed in terms of the local
characteristics of $X$. The  conditions under which such {\it
Markovian projections} exist are shown to hold for a large class of
stochastic processes used in applications. Our results extend  a
``mimicking theorem" of Gy\"ongy (1986)
  to discontinuous semimartingales.  We use this result to
 derive a partial integro-differential equation for the
one-dimensional distributions of  a semimartingale, extending the
Kolmogorov forward equation to a non-Markovian setting.
\end{abstract}
MSC Classification Numbers:     60J75,  60H10\\ Keywords: mimicking
theorem, semimartingale, Markovian projection, martingale problem,
Kolmogorov forward equation. \tableofcontents
\newpage

\section{Introduction}
Stochastic processes with path-dependent, non-Markovian dynamics are
used in various fields such as physics and mathematical finance
present challenges for computation, simulation and estimation. In
some applications  where one is interested in the marginal
distributions of such processes, such as option pricing or Monte
Carlo simulation of densities, the complexity of the model can be
greatly reduced by considering a low-dimensional Markovian model
with the same marginal distributions. Given a process $\xi$, a
Markov process $X$ is said to {\it mimick} $\xi$ on the time
interval $[0,T]$, $T>0$, if $\xi$ and $X$ have the same marginal
distributions:
\begin{equation}\label{mimic.eq} \forall t\in[0,T],\qquad
\xi_t\mathop{=}^{d} X_t.\end{equation} $X$ is called a {\it
Markovian projection} of $\xi$.
 The construction of
Markovian projections was first suggested by Br\'emaud
\cite{bremaud81} in the context of queues. Construction of mimicking
processes of 'Markovian' type has been explored for Ito processes
\cite{gyongy86} and marked point processes \cite{contminca08}. A
notable application is the derivation of forward equations for
option pricing \cite{forward,dupire97}.

We propose in this paper a systematic construction of such
Markovian projections for  (possibly discontinuous) semimartingales.
Given a semimartingale $\xi$, we give conditions under which there
exists a Markov process $X$ whose   marginal distributions are
identical to those of
  $\xi$, and give an explicit construction of the Markov process $X$ as the solution of a martingale problem for an integro-differential operator \cite{bass04,komatsu73,stroock75,stroock03}.
%  Our result can be seen as a
%generalization to the discontinuous case of a  previous result by
%Gy\"ongy \cite{gyongy86} for continuous semimartingales.

In the martingale case, the Markovian projection problem is related
to the problem of constructing martingales with a given flow of
marginals, which dates back to Kellerer \cite{kellerer} and has been
recently explored by Yor and coauthors
\cite{bakeryor09,hirschyor09,madanyor02} using a variety of
techniques. The construction proposed in this paper is different from the others since it does not rely on the martingale property
of $\xi$. We shall see nevertheless that our construction
preserves the (local) martingale property. Also, whereas the approaches
described in \cite{bakeryor09,hirschyor09,madanyor02} use as a
starting point the marginal distributions of $\xi$, our construction
describes the mimicking Markov process $X$ in terms of the local
characteristics \cite{jacodshiryaev} of the semimartingale $\xi$.
Our construction thus applies more readily to solutions of
stochastic differential equations where the local characteristics
are known but not the marginal distributions.

Section \ref{mimic.sec} presents a  Markovian projection result for
a $\mathbb{R}^d$-valued semimartingale given by its local
characteristics. We use these results in section \ref{fokkerplanck.sec}
to derive a partial integro-differential equation for the
one-dimensional distributions of a discontinuous semimartingale,
 thus extending the Kolmogorov forward equation to a non-Markovian
setting. Section \ref{extension.sec} shows how this result may be
applied to processes whose jumps are represented as the integral of
a predictable jump amplitude with respect to a Poisson random
measure, a representation often used in stochastic differential
equations with jumps. In Section \ref{examples.sec} we show that our
construction applies to a large class of semimartingales, including
smooth functions of a Markov process (Section \ref{markov.sec}),
%multivariate marked point processes (Section \ref{mpp.sec})
and
time-changed L\'evy processes (Section \ref{timechange.sec}).

\section{A mimicking theorem for discontinuous semimartingales}\label{mimic.sec}
Consider, on a filtered probability space
$(\Omega,\mathcal{F},(\mathcal{F}_t)_{t\geq 0},\mathbb{P})$, an Ito
semimartingale, on the time interval $[0,T]$, $T>0$, given by the decomposition
\begin{equation}\label{classeJ}
  \xi_t=\xi_0+\int_0^t \beta_s\,ds+\int_0^t \delta_s\,dW_s
  +\int_0^t\int_{\|y\|\leq 1}y\,\tilde{M}(ds\:dy)+ \int_0^t\int_{\|y\|> 1}y\,{M}(ds\:dy),
\end{equation}
where $\xi_0$ is in $\mathbb{R}^d$, $W$ is a standard
$\mathbb{R}^n$-valued Wiener process, $M$ is an integer-valued
random measure  on
  $[0,T]\times\mathbb{R}^d$ with compensator measure $\mu$ and
$\tilde{M}=M-\mu$ is the compensated measure
\cite[Ch.II,Sec.1]{jacodshiryaev}, $\beta$ (resp. $\delta$) is an
 adapted process with values in $\mathbb{R}^d$ (resp. $M_{d\times n}(\mathbb{R})$).

Our goal   is to construct a Markov process, on some filtered
probability space $(\Omega_0,{\cal B},({\cal B}_t)_{t\geq
0},\mathbb{Q})$ such that $X$ and $\xi$ have the same marginal
distributions on $[0,T]$, i.e. the law of $X_t$ under $\mathbb{Q}$ coincides with the law of $\xi_t$ under $\mathbb{P}$.
We will construct $X$ as the solution to a {\it
martingale problem}
\cite{ethierkurtz,stroock75,stroockvaradhan,mikulevicius92} on the
canonical space $\Omega_0=D([0,T],\mathbb{R}^d)$.

\subsection{Martingale problems for integro-differential operators}
%\begin{equation}
%\forall f\in\mathcal{C}_0^\infty(\mathbb{R}^d)\:\forall t\in[0,\infty[\quad Lf=L_tf(x)=L_{t,x}f(x)
%\end{equation}
Let $\Omega_0=D([0,T],\mathbb{R}^d)$ be the Skorokhod space of right-continuous functions with left limits.
Denote by $X_t(\omega)=\omega(t)$ the canonical process on $\Omega_0$, ${\cal B}_t^0$ its natural filtration and
$\mathcal{B}_t\equiv\mathcal{B}_{t+}^0$.

Our goal   is to construct a probability measure $\mathbb{Q}$ on
$\Omega_0$ such that $X$ is a Markov process under $\mathbb{Q}$  and
$\xi$ and $X$ have the same one-dimensional distributions:
$$\forall t\in[0,T],\quad  \xi_t\overset{\underset{\mathrm{d}}{}}{=}  X_t.$$
In order to do this, we shall characterize $\mathbb{Q}$  as the solution of a {\it martingale problem}
 for an appropriately chosen integro-differential operator $\mathcal{L}$.

Let $\mathcal{C}_b^0(\mathbb{R}^d)$ denote the set of  bounded and
continuous functions on $\mathbb{R}^d$, $\mathcal{C}_0^\infty(\mathbb{R}^d)$ the set of  infinitely
differentiable functions with compact support on $\mathbb{R}^d$ and $\mathcal{C}_0(\mathbb{R}^d)$ the set of continuous functions defined on $\mathbb{R}^d$ and vanishing at infinity for the supremum norm. Let
$\mathcal{R}(\mathbb{R}^d-\{0\})$ denote the space of L\'evy
measures on $\mathbb{R}^d$ i.e. the set of non-negative
$\sigma$-finite measures $\nu$ on $\mathbb{R}^d-\{0\}$ such that
$$\int_{\mathbb{R}^d-\{0\}} \nu(dy) \left(1\wedge \|y\|^2\right) < \infty. $$
 $\mathcal{R}(\mathbb{R}^d-\{0\})$ is endowed with the structure of
 a measurable space, such that for each $\nu\in \mathcal{R}(\mathbb{R}^d-\{0\})$, the map
 \begin{eqnarray*}
 \mathcal{C}_b^0(\mathbb{R}^d)& \mapsto& \mathbb{R},\\[0.1cm]
 \varphi & \mapsto& \int_{\mathbb{R}^d-\{0\}} \nu(dy)\frac{
 \|y\|^2}{1+\|y\|^2} \varphi(y)
\end{eqnarray*}
is measurable.

Consider a time-dependent integro-differential operator
$\mathcal{L}=(\mathcal{L}_t)_{t\in[0,T]}$ defined, for  $f\in\mathcal{C}_0^\infty(\mathbb{R}^d)$, by
  \begin{equation}\label{non.deg.op}
   \begin{split}
      \mathcal{L}_tf(x)&=b(t,x).\nabla f(x)+\sum_{i,j=1}^d\frac{a_{ij}(t,x)}{2}\frac{\partial^2 f}{\partial x_i\partial x_j} (x)\\[0.1cm]
      &+\int_{\mathbb{R}^d}[f(x+y)-f(x)-1_{\{\|y\|\leq1\}}\,y.\nabla f(x)]n(t,dy,x),
   \end{split}
  \end{equation}
where $a:[0,T]\times \mathbb{R}^d\mapsto M_{d\times d}(\mathbb{R})$, $b:[0,T]\times \mathbb{R}^d\mapsto \mathbb{R}^d$ and \\
$n:[0,T]\times \mathbb{R}^d\mapsto \mathcal{R}(\mathbb{R}^d-\{0\})$
are measurable functions.
%$(n(t,\,.\,,x), (t,x)\in [0,T]\times \mathbb{R}^d)$ is a measurable family of
%  positive  measures on $\mathbb{R}^d-\{0\}$

For $(t_0,x_0)\in[0,T]\times \mathbb{R}^d$, we recall that a probability measure
$\mathbb{Q}_{t_0,x_0}$ on $\left(\Omega_0,\mathcal{B}_T\right)$ is a solution to the {\it martingale problem} for
$(\mathcal{L},\mathcal{C}_0^\infty(\mathbb{R}^d))$ on $[0,T]$ if
$\mathbb{Q}\left(X_{u}=x_0,\,0\leq u\leq t_0\right)=1$ and for any
$f\in\mathcal{C}_0^\infty(\mathbb{R}^d)$, the process
\[ f(X_t)-f(x_0)-\int_{t_0}^t \mathcal{L}_sf(X_s)\,ds\]
is a $\left(\mathbb{Q}_{t_0,x_0},(\mathcal{B}_t)\right)$-martingale on $[0,T]$. Existence, uniqueness and
regularity of solutions to martingale problems for
integro-differential operators have been studied under various
conditions on the coefficients
\cite{stroockvaradhan,jacod79,ethierkurtz,komatsu73,mikulevicius92,figalli}.

We make the following assumptions on the coefficients:
\begin{assumption}[Boundedness  of coefficients]\label{A1}
\begin{eqnarray*}
(i)&\exists K_1>0,& \forall (t,z)\in[0,T]\times\mathbb{R}^d, \quad\|b(t,z)\| + \|a(t,z)\|+ \int \left(1\wedge \|y\|^2\right)\,n(t,dy,z)\leq K_1\\[0.1cm]
%&\mathrm{and}&\nonumber\\[0.1cm]
(ii)&&\lim_{R\to\infty} \int_0^T \sup_{z\in\mathbb{R}^d} n\left(t,\{\|y\|\geq R\},z\right)\,dt=0.
\end{eqnarray*}
where $\|.\|$ denotes the Euclidean norm.
\end{assumption}
\begin{assumption}[Continuity]\label{A2}\  \\
(i) For   $t\in [0,T]$ and $B\in\mathcal{B}(\mathbb{R}^d -\{0\}),$
$b(t,.)$, $a(t,.)$ and $n(t,B,.)$ are continuous on $\mathbb{R}^d$,
uniformly in $t\in [0,T]$. \\[0.1cm]
(ii) For all $z\in\mathbb{R}^d$, $b(.,z)$,
$a(.,z)$ and $n(.,B,z)$ are right-continuous on $[0,T[$, uniformly
in $z\in \mathbb{R}^d$.
\end{assumption}
\begin{assumption}[Non-degeneracy]\label{C}
\begin{eqnarray*}
  %&&\quad (i)\quad (t,z)\to\sigma(t,z)\, \mathrm{is\,continuous\,in}\, z\in\mathbb{R}^+\,\mathrm{uniformly \,in}\,t\in[0,T]\\
  &\mathrm{Either} & \quad \forall R>0\:\forall t\in [0,T] \quad \inf_{\|z\| \leq R}\,\inf_{x\in\mathbb{R}^d,\,\|x\|=1} {}^tx.a(t,z).x>0 \\[0.1cm]
 & \mathrm{or}& a\equiv 0\quad{\rm and\ there\ exists}\quad \beta\in]0,2[, C>0,\,\mathrm{and\ a\
 family}\:
 n^\beta(t,dy,z)\\[0.1cm]
&& \mathrm{of \ positive\:measures\  such\ that}\\[0.1cm]
  &&\forall (t,z)\in [0,T]\times\mathbb{R}^d\quad \quad n(t,dy,z)=n^\beta(t,dy,z)+\frac{C}{\|y\|^{d+\beta}}\,dy,\\[0.1cm]
  && \int \left(1\wedge \|y\|^{\beta}\right)\,n^\beta(t,dy,z) \leq K_2,\quad \lim_{\epsilon\to 0} \sup_{z\in\mathbb{R}^d} \int_{\|y\|\leq \epsilon} \|y\|^{\beta}\,n^\beta(t,dy,z)=0. \\[0.1cm]
\end{eqnarray*}
\end{assumption}
Mikulevicius and Pragarauskas \cite{mikulevicius92}
show that
 if  $\mathcal{L}$ satisfies Assumptions \ref{A1},\ref{A2} and \ref{C} ( which corresponds to a ``non-degenerate L\'evy operator" in the terminology of \cite{mikulevicius92}) the martingale problem for $(\mathcal{L},\mathcal{C}_0^\infty(\mathbb{R}^d)\ )$ has a unique solution $\mathbb{Q}_{t_0,x_0}$ for every initial condition $(t_0,x_0)\in[0,T]\times\mathbb{R}^d$:
\begin{proposition}\label{th.well.posed}%\cite[Theorem 5]{mikulevicius92}\\
 Under Assumptions \ref{A1}, \ref{A2} and \ref{C} the martingale problem for $((\mathcal{L}_t)_{t\in[0,T]},\mathcal{C}_0^\infty(\mathbb{R}^d))$  on $[0,T]$ is well-posed : for any
$x_0\in\mathbb{R}^d, t_0\in [0,T]$, there exists a unique
probability measure $\mathbb{Q}_{t_0,x_0}$ on
$(\Omega_0,{\cal B}_T)$ such that
$\mathbb{Q}\left(X_{u}=x_0,\,0\leq u\leq t_0\right)=1$ and for any
$f\in\mathcal{C}_0^\infty(\mathbb{R}^d)$,
\[ f(X_t)-f(x_0)-\int_{t_0}^t \mathcal{L}_sf(X_s)\,ds\]
is a $\left(\mathbb{Q}_{t_0,x_0},(\mathcal{B}_t)_{t\geq 0}\right)$-martingale on $[0,T]$.
%Furthermore $(x_0)\in[0,T]\times\mathbb{R}^d\to \mathbb{Q}_{s,x_0}$ is continuous.\\
Under $\mathbb{Q}_{t_0,x_0}$, $(X_t)$ is a  Markov process and the
evolution operator $(Q_{t_0,t})_{t\in[t_0,T]}$ defined by
\begin{equation}
\forall f\in \mathcal{C}_b^0(\mathbb{R}^d),\quad Q_{t_0,t}
f(x_0)=\mathbb{E}^{\mathbb{Q}_{t_0,x_0}}\left[f(X_t)\right]
\end{equation}
verifies the following continuity property:
\begin{equation}
\forall f\in \mathcal{C}_0^\infty(\mathbb{R}^d), \quad\lim_{t\downarrow t_0} Q_{t_0,t}f (x_0) =
 f(x_0).\label{strong.continuity}
\end{equation}
In particular, denoting $q_{t_0,t}(x_0,dy)$ the marginal distribution  of $X_t$, the map
\begin{equation}\label{weak-right-continuity}
 t\in[t_0,T[\mapsto \int_{\mathbb{R}^d} q_{t_0,t}(x_0,dy) f(y)
\end{equation}
is right-continuous, for any $f\in \mathcal{C}_0^\infty(\mathbb{R}^d)$.
\end{proposition}
\begin{proof}
By a result of Mikulevicius and Pragarauskas  \cite[Theorem 5]{mikulevicius92}, the martingale problem  is well-posed.
We only need to prove that the continuity property \eqref{strong.continuity} holds on $[t_0,T[$ for any $x_0\in\mathbb{R}^d$. %Theorem \cite[Theorem 5]{mikulevicius92} shows, under Assumptions \ref{A1}, \ref{A2} and \ref{C}, that for each $(s, x_0)\in[0,T[\times\mathbb{R}^d$, there exists a unique measure $\mathbb{Q}_{s,x_0}$ such that $\mathbb{Q}_{s,x_0}\left(X_r=x_0,\,0\leq r \leq s\right)=1$ and for any
For $f\in\mathcal{C}_0^\infty(\mathbb{R}^d)$,
 %the process %\[ f(X_t)-f(x_0)-\int_s^t \mathcal{L}_rf(X_r)\,dr\]
%is a $\left(\mathbb{Q}_{s,x_0},(\mathcal{B}_t)_{t\geq
%0}\right)$-martingale on $[0,T]$ and $(X_t,\mathcal{B},\mathbb{Q}_{s,x_0})$ is Markovian.
%Furthermore, the function $(x_0)\in[0,T[\times\mathbb{R}^d\mapsto\mathbb{Q}_{s,x_0}$ is continuous.
\begin{eqnarray*}
 Q_{t_0,t} f(x_0)&=&\mathbb{E}^{\mathbb{Q}_{t_0,x_0}}\left[f(X_t)\right]\\[0.1cm]
&=&f(x_0)+\mathbb{E}^{\mathbb{Q}_{t_0,x_0}}\left[\int_{t_0}^t \mathcal{L}_sf(X_s)\,ds\right].\\
\end{eqnarray*}
Given Assumption \ref{A1}, $t\in[t_0,T]\mapsto \int_0^t \mathcal{L}_sf(X_s)\,ds$ is uniformly bounded on $[t_0,T]$. By Assumption \ref{A2}, since $X$ is right continuous, $s\in[t_0,T[\mapsto \mathcal{L}_sf(X_s)$ is right-continuous up to a $\mathbb{Q}_{t_0,x_0}$-null set and
\begin{equation*}
\lim_{t\downarrow t_0} \int_{t_0}^t \mathcal{L}_sf(X_s)\,ds= 0 \quad\mathrm{a.s.}
\end{equation*}
Applying the dominated convergence theorem yields,
\begin{equation*}
\lim_{t\downarrow t_0} \mathbb{E}^{\mathbb{Q}_{t_0,x_0}}\left[\int_{t_0}^t \mathcal{L}_sf(X_s)\,ds\right]=0,
\end{equation*}
that is
\begin{equation*}
\lim_{t\downarrow t_0} Q_{t_0,t} f(x_0) = f(x_0),
\end{equation*}
implying that $t\in[t_0,T[\mapsto Q_{t_0,t} f(x_0)$ is right-continuous at $t_0$.
\end{proof}

\subsection[Uniqueness for the Kolmogorov forward equation]{A uniqueness result for the Kolmogorov forward equation}
An important property of continuous-time Markov processes is their
link with partial (integro-)differential equation (PIDE) which
allows to use analytical tools for studying their probabilistic
properties. In particular the transition density of a Markov process
solves the forward Kolmogorov equation (or Fokker-Planck equation)
\cite{stroock03}.  The following result shows that under Assumptions \ref{A1}, \ref{A2} and \ref{C}  the   forward equation corresponding to ${\cal L}$ has a unique solution:
\begin{theorem}[Kolmogorov Forward equation]
\label{evolution.equation}
Under Assumptions \ref{A1}, \ref{A2} and \ref{C}, for each $(t_0,x_0)\in[0,T]\times\mathbb{R}^d$, there exists a unique family
$\left(p_{t_0,t}(x_0,dy),t\in[t_0,T]\right)$ of bounded measures on $\mathbb{R}^d$ such that $p_{t_0,t_0}(x_0,.)=\epsilon_{x_0}$, the point mass at $x_0$ and
\begin{equation}\label{kolmogore}
\forall t\in[t_0,T],\forall g\in  \mathcal{C}_0^\infty(\mathbb{R}^d),\qquad\int_{\mathbb{R}^d}
g(y)\frac{dp_{t_0,t}}{dt}(x_0,dy)=\int_{\mathbb{R}^d} p_{t_0,t}(x_0,dy)L_tg(y).\end{equation}
$p_{t_0,t}(x_0,.)$ is  the conditional distribution of $X_t$ given $X_{t_0}=x_0$, where $(X,\mathbb{Q}_{t_0,x_0})$ is the unique solution of the martingale problem for $(\mathcal{L},\mathcal{C}_0^\infty(\mathbb{R}^d)\ )$ starting from $(t_0,x_0)$.
\end{theorem}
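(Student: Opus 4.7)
I will split the theorem into \emph{existence} and \emph{uniqueness}; existence is a direct consequence of Proposition \ref{th.well.posed}, while uniqueness is the delicate half and will be obtained by a duality argument against the backward evolution equation associated with $\mathcal{L}$.

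\textbf{Existence.} Let $\mathbb{Q}_{x_0}$ be the unique solution of the martingale problem given by Proposition \ref{th.well.posed}, and set $p_t(x_0,\cdot)$ to be the law of $X_t$ under $\mathbb{Q}_{x_0}$, so that $p_0(x_0,\cdot)=\epsilon_{x_0}$. For any $g\in\mathcal{C}_0^\infty(\mathbb{R}^d)$, taking expectations in the martingale identity for $g$ under $\mathbb{Q}_{x_0}$ yields
\[
\int g(y)\,p_t(x_0,dy) \;=\; g(x_0) + \int_0^t \!\!\int \mathcal{L}_s g(y)\,p_s(x_0,dy)\,ds .
\]
Assumption \ref{A1} guarantees that $\mathcal{L}_s g$ is uniformly bounded on $[0,T]\times\mathbb{R}^d$ (here the control $\int(1\wedge\|y\|^2)\,n(s,dy,z)\le K_1$ together with $g\in\mathcal{C}_0^\infty$ is crucial), so the inner integral is a continuous function of $s$ and I may differentiate in $t$ to recover the weak form \eqref{kolmogore}.

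\textbf{Uniqueness.} Fix $t\in(0,T]$ and $g\in\mathcal{C}_0^\infty(\mathbb{R}^d)$. Using again Proposition \ref{th.well.posed} and its strong continuity on $[0,T[$, I consider the time-inhomogeneous backward evolution
\[
u(s,y) \;=\; \mathbb{E}^{\mathbb{Q}^{s,y}}\bigl[g(X_t)\bigr],\qquad s\in[0,t],
\]
where $\mathbb{Q}^{s,y}$ denotes the unique solution of the martingale problem for $\mathcal{L}$ on $[s,T]$ started at $y$ at time $s$. By the martingale property this $u$ formally satisfies the backward Kolmogorov equation $\partial_s u + \mathcal{L}_s u = 0$ on $[0,t]$ with terminal datum $u(t,\cdot)=g$. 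Given any other family $(\tilde p_s(x_0,\cdot))_{s\in[0,T]}$ of bounded measures solving \eqref{kolmogore} with $\tilde p_0=\epsilon_{x_0}$, the idea is to pair $u$ with $\tilde p$ and exploit the cancellation: one expects
\[
\frac{d}{ds}\int u(s,y)\,\tilde p_s(x_0,dy) \;=\; \int\bigl[\partial_s u + \mathcal{L}_s u\bigr](s,y)\,\tilde p_s(x_0,dy) \;=\; 0,
\]
so that $\int u(s,y)\,\tilde p_s(x_0,dy)$ is constant on $[0,t]$. Evaluating at $s=0$ and $s=t$ yields $\int g(y)\,\tilde p_t(x_0,dy)=u(0,x_0)=\mathbb{E}^{\mathbb{Q}_{x_0}}[g(X_t)]=\int g(y)\,p_t(x_0,dy)$, and since $\mathcal{C}_0^\infty(\mathbb{R}^d)$ is measure-determining this forces $\tilde p_t(x_0,\cdot)=p_t(x_0,\cdot)$.

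\textbf{Main obstacle.} The subtle point is justifying the duality computation, because \eqref{kolmogore} is only postulated to hold for $g\in\mathcal{C}_0^\infty$, while $u(s,\cdot)$ is \emph{a priori} only bounded and continuous. What is really needed is that $u(s,\cdot)$ lies in (a suitable closure of) the domain of $\mathcal{L}_s$ and that $s\mapsto u(s,\cdot)$ is differentiable with $\partial_s u = -\mathcal{L}_s u$ in a sense compatible with integration against $\tilde p_s$. This is exactly where the non-degeneracy Assumption \ref{C} enters: in the diffusive case it provides uniform parabolic smoothing, and in the $\beta$-stable-dominated case the fractional analogue, both of which are used in \cite{mikulevicius92} to upgrade the solution of the martingale problem to a classical solution of the backward Cauchy problem. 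I would therefore invoke those regularity estimates of Mikulevicius--Pragarauskas to place $u$ in $\mathcal{C}_b^{1,2}([0,t]\times\mathbb{R}^d)$ (or its non-local counterpart), truncate and mollify $u(s,\cdot)$ by elements of $\mathcal{C}_0^\infty$ and pass to the limit in the weak forward equation using Assumption \ref{A1} to dominate the remainders. Once this approximation is carried out, the chain of identities above closes and uniqueness of $p_t(x_0,\cdot)$, together with its identification as the marginal of $\mathbb{Q}_{x_0}$, follows.
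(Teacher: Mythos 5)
Your proof takes a genuinely different route from the paper's. Where you attempt a classical forward--backward duality argument---pairing any putative solution $\tilde p_s$ of the forward equation against a solution $u(s,\cdot)$ of the backward Cauchy problem and showing $\int u(s,y)\,\tilde p_s(dy)$ is constant---the paper instead rewrites \eqref{kolmogore} as the forward equation for a \emph{time-homogeneous} operator $A(f\gamma)(t,x)=\gamma(t)\mathcal{L}_tf(x)+f(x)\gamma'(t)$ on the space-time domain $[0,T]\times\mathbb{R}^d$, invokes the Ethier--Kurtz equivalence between martingale problem uniqueness for $\mathcal{L}$ and for $A$, and then runs a resolvent/Laplace-transform argument: strong continuity of the associated semigroup $\mathcal{Q}_t$ gives, via Hille--Yosida, that $\mathcal{R}(\lambda-A)$ is dense, whence the Laplace transforms of $t\mapsto\mathcal{Q}_t h$ and $t\mapsto P_t h$ coincide for all $h$ in a separating class; right-continuity then upgrades this to pointwise equality. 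The paper's approach deliberately works at the level of the semigroup and never needs pointwise regularity of any backward solution.

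That last contrast is where your proposal has a genuine gap. You correctly identify the obstacle---that the weak forward equation is only postulated against $g\in\mathcal{C}_0^\infty(\mathbb{R}^d)$ while $u(s,\cdot)$ is a priori merely bounded and continuous---but the resolution you sketch does not close it. You would need $u\in\mathcal{C}_b^{1,2}([0,t]\times\mathbb{R}^d)$ (or a suitable non-local analogue) \emph{together with} an approximation of $u(s,\cdot)$ by $\mathcal{C}_0^\infty$ functions in the graph norm of $\mathcal{L}_s$, uniformly in $s$, and then a dominated-convergence passage in $\int\mathcal{L}_s(\cdot)\,\tilde p_s(dy)$ where the non-local tail $\int[u(s,y+z)-u(s,y)-\cdots]\,n(s,dz,y)$ involves values of $u$ far from the support of any truncation. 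Under Assumption \ref{A1} the tail mass is controlled, but since $u$ need not decay at infinity these remainder terms are not obviously negligible, and Mikulevi\v{c}ius--Pragarauskas establish well-posedness of the martingale problem, not interior Schauder-type regularity for the backward Cauchy problem under Assumptions \ref{A1}--\ref{C}. As written, "I would therefore invoke those regularity estimates $\dots$, truncate and mollify $\dots$ and pass to the limit" is precisely the hard part of the duality approach and is asserted rather than proved; the paper's Ethier--Kurtz route is designed exactly to avoid having to establish this regularity.
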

\begin{proof}
%Using integration by parts formula for the process $(f(X_t)\gamma(t))$ leads to:
%\begin{equation*}
%f(X_T)\gamma(T)=f(X_0)\gamma(0)+\int_0^T f(X_{t-})\gamma'(t)\,dt+\int_0^T\gamma(t)d[f(X_{t-})]
%\end{equation*}
%Taking expectation on both sides and using Fubini since $\gamma$ is a bounded function:
%\begin{equation*}
%\mathbb{E}^{\mathbb{P}}\left[f(X_T)\gamma(T)\right]=\mathbb{E}^{\mathbb{P}}\left[f(X_0)\gamma(0)\right]+\int_0^T \mathbb{E}^{\mathbb{P}}\left[f(X_{t-})\right]\gamma'(t)\,dt+\int_0^T\gamma(t)d\mathbb{E}^{\mathbb{P}}\left[f(X_{t-})\right]
%\end{equation*}
%Using equation (\ref{eq.pt}),
%\begin{eqnarray*}
%\mathbb{E}^{\mathbb{P}}\left[f(X_T)\gamma(T)\right]&=&\mathbb{E}^{\mathbb{P}}\left[f(X_0)\gamma(0)\right]+\int_0^T \mathbb{E}^{\mathbb{P}}\left[f(X_{t-})\gamma'(t)+\gamma(t)L_tf(X_{t-})\right]\,dt\\
%&=&\mathbb{E}^{\mathbb{P}}\left[f(X_0)\gamma(0)\right]+\int_0^T \mathbb{E}^{\mathbb{P}}\left[A(f\gamma)(t,X_{t-})\right]\,dt \label{eq.pt.gamma}
%\end{eqnarray*}
%Let $p_t(dy)$ denote the law of $(X_t)$ under $\mathbb{P}$, (\ref{eq.pt.gamma}) writes:
\begin{enumerate}
\item Under Assumptions \ref{A1}, \ref{A2} and \ref{C},  Proposition \ref{th.well.posed} implies that the martingale problem for $\mathcal{L}$ on the domain $\mathcal{C}_0^\infty(\mathbb{R}^d)$ is well-posed.
Denote  $(X,\mathbb{Q}_{t_0,x_0})$ the unique solution of the martingale problem for $\mathcal{L}$ with initial condition  $x_0\in\mathbb{R}^d$ at $t_0$, and define
\begin{equation}
\forall t\geq t_0,\quad \forall g\in \mathcal{C}_b^0(\mathbb{R}^d),\quad Q_{t_0,t} g(x_0)=\mathbb{E}^{\mathbb{Q}_{t_0,x_0}}\left[g(X_t)\right].
\end{equation}
By \cite[Theorem 5]{mikulevicius92}, $( Q_{s,t}, 0\leq s\leq t)$ is then a (time-inhomogeneous) semigroup, satisfying the continuity property \eqref{strong.continuity} on $[t_0,T[$.

If $q_{t_0,t}(x_0,dy)$ denotes the law of $X_t$ under $\mathbb{Q}_{t_0,x_0}$, the martingale property implies that $q_{t_0,t}(x_0,dy)$ satisfies
\begin{equation}\label{eq.qt.bis}
\forall g\in\mathcal{C}_0^\infty(\mathbb{R}^d),\quad \int_{\mathbb{R}^d}  q_{t_0,t}(x_0,dy)g(y)=g(x_0)+\int_{t_0}^t\int_{\mathbb{R}^d} q_{t_0,s}(x_0,dy)\mathcal{L}_sg(y)\,ds.
\end{equation}
Proposition \ref{th.well.posed} provides the right-continuity of \\
$t\in[t_0,T[\mapsto \int_{\mathbb{R}^d}  q_{t_0,t}(x_0,dy)g(y)$ for any $g$ in $\mathcal{C}_0^\infty(\mathbb{R}^d)$. Given Assumption \ref{A2}, $q_{t_0,t}$ is a solution of \eqref{kolmogore} with initial condition $q_{t_0,t_0}(dy)=\epsilon_{x_0}$.
This solution of (\ref{kolmogore}) is in particular positive with mass $1$.

To show uniqueness of solutions of \eqref{kolmogore},  we will rewrite \eqref{kolmogore} as the forward Kolmogorov equation
associated with a {\it homogeneous}  operator on space-time domain and use uniqueness results for the corresponding homogeneous equation.

\item Let $\mathcal{D}^0\equiv \mathcal{C}^1([0,T])\otimes \mathcal{C}_0^\infty(\mathbb{R}^d)$ be the tensor product of $\mathcal{C}^1([0,T])$ and $\mathcal{C}_0^\infty(\mathbb{R}^d)$.
Define  the operator $A$ on $\mathcal{D}^0$ by %$\mathcal{C}^1([0,T])\otimes \mathcal{C}_0^\infty(\mathbb{R}^d)$ by
\begin{equation}\label{new.operator}
\forall f\in C^\infty_0(\mathbb{R}^d),\forall \gamma\in\mathcal{C}^1([0,T]),\quad A(f\gamma)(t,x)=\gamma(t)\mathcal{L}_tf(x)+f(x)\gamma'(t).
\end{equation}
\cite[Theorem 7.1, Chapter 4]{ethierkurtz} implies that for any $x_0\in \mathbb{R}^d$, if  $(X,\mathbb{Q}_{t_0,x_0})$ is a solution of the
martingale problem for $\mathcal{L}$, then the law of $\eta_t=(t,X_t)$ under $\mathbb{Q}_{t_0,x_0}$ is a
solution of the martingale problem for  $A$: in particular for any
$f\in\mathcal{C}_0^\infty(\mathbb{R}^d)$ and
$\gamma\in\mathcal{C}^1([0,T])$,
\begin{equation}
     \int q_t(x_0,dy)f(y)\gamma(t)= f(x_0)\gamma(0)+\int_0^t \int q_s(x_0,dy)A(f\gamma)(s,y)\,ds.
    \end{equation}
 \cite[Theorem 7.1, Chapter 4]{ethierkurtz} implies also that if the law of $\eta_t=(t,X_t)$ is a
solution of the martingale problem for $A$ then the law of $X$ is also a solution of the
martingale problem for $\mathcal{L}$, namely: uniqueness holds for the martingale problem associated to the
operator $\mathcal{L}$ on $\mathcal{C}_0^\infty(\mathbb{R}^d)$ if and only if uniqueness
holds for the martingale problem associated to the  martingale problem for $A$ on $\mathcal{D}^0$.
%$\mathcal{C}^1([0,T])\otimes C^\infty_0(\mathbb{R}^d)$.

Define, for  $t\in[0,T] $ and $h \in \mathcal{C}_b^0([0,T]\times\mathbb{R}^d)$,
\begin{equation}
\forall (s,x) \in[0,T] \times \mathbb{R}^d,\quad \mathcal{U}_{t}h(s,x)= Q_{s,s+t}\left( h(t+s,.) \right)(x).
%=\int_{\mathbb{R}^d} \,q_{s,t+s}(x,dy)h(t+s,y).
\end{equation}
The properties of $Q_{s,t}$ then imply that $(\mathcal{U}_{t},t\geq 0)$ is a family of linear operators on   $\mathcal{C}_b^0([0,T]\times\mathbb{R}^d)$ satisfying $\mathcal{U}_{t}\mathcal{U}_{r}=\mathcal{U}_{t+r}$ on  $\mathcal{C}_b^0([0,T]\times\mathbb{R}^d)$ and $\mathcal{U}_{t}h \to h$ in as $t \downarrow 0$ on $\mathcal{D}^0$. $(\mathcal{U}_{t},t\geq 0)$ is thus a  semigroup on $\mathcal{C}_b^0([0,T]\times\mathbb{R}^d)$ satisfying the continuity property of the form \eqref{strong.continuity} on $\mathcal{D}^0$.

One observes that, for $h\in\mathcal{D}^0$,
\begin{equation*}
 \mathcal{U}_{t}h(s,x)= Q_{s,s+t}\left( h(t+s,.) \right)(x) = \int_{\mathbb{R}^d} q_{s,s+t} (x,dy) h(t+s,y).
\end{equation*}
%Or similarly (change of variable),
%\begin{equation*}
%\forall t\in [t_0,T], \quad\mathcal{U}_{t-t_0}h(t_0,x)= \int_{\mathbb{R}^d} q_{t_0,t}(x,dy) h(t,y).
%\end{equation*}
%For simplicity, denote $q_t(x_0,dy)\equiv q_{t_0,t}(x_0,dy)$.
Without loss of generality, let us put $t_0=0$ in the sequel. Since the martingale problem holds for $\eta_t=(t,X_t)$, then for all $h\in\mathcal{D}^0$, the martingale property yields,
\begin{equation}\label{u.mart}
\begin{split}
&\forall t\in [0,T],\forall (s,x)\in [0,T]\times\mathbb{R}^d \\
&\quad\mathcal{U}_{t} h(s,x)= \mathcal{U}_0 h(s,x) +\int_0^t \mathcal{U}_u Ah(s,x)\,du.
\end{split}
\end{equation}
Considering again this equality for $0\leq \epsilon <t$,
\begin{equation}\label{u.eps.mart}
 \mathcal{U}_t h- \mathcal{U}_{\epsilon} h=\int_{\epsilon}^t \mathcal{U}_u Ah\,du.
\end{equation}
Denoting $\mathcal{C}_0([0,T]\times\mathbb{R}^d)$ the set of continuous functions defined on $[0,T]\times\mathbb{R}^d$ and vanishing at infinity for the supremum norm,  we intend to apply \cite[Theorem 2.2, Chapter 4]{ethierkurtz} to prove that $(\mathcal{U}_{t},t\geq 0)$ generates a strongly continuous contraction on $\mathcal{C}_0([0,T]\times\mathbb{R}^d)$ with infinitesimal generator given by the closure $\overline{A}$. First, one shall simply observe that $\mathcal{D}^0$ is dense in $\mathcal{C}_0([0,T]\times\mathbb{R}^d)$ implying that the domain of $A$ is dense in $\mathcal{C}_0([0,T]\times\mathbb{R}^d)$ too. The well-posedness of the martingale problem for $A$ implies that $A$ satisfies the maximum principle. To conclude, it is sufficient to prove that $Im(\lambda-\overline{A})$ is dense in $\mathcal{C}_0([0,T]\times\mathbb{R}^d)$ for some $\lambda$ or even better that $\mathcal{D}^0$ is included in $Im(\lambda-\overline{A})$. We recall that $Im(\lambda-\overline{A})$ denotes the image of the domain of $\overline{A}$ under the map $(\lambda-\overline{A})$.

\item Consider $0\leq \epsilon<T$, (\ref{u.eps.mart}) yields for $h\in\mathcal{D}^0$,
\begin{eqnarray*}
\int_{\epsilon}^T e^{-t} \mathcal{U}_t h\,dt &=& \int_{\epsilon}^T e^{-t} \mathcal{U}_{\epsilon} h\,dt +\int_{\epsilon}^T e^{-t} \int_{\epsilon}^t \mathcal{U}_s Ah\,ds\,dt\\
&=& \mathcal{U}_{\epsilon} h\left[e^{-\epsilon}-e^{-T}\right] + \int_{\epsilon}^T ds\, \left(\int_{s}^T e^{-t}\,dt\right)\,\mathcal{U}_s Ah\\
&=& \mathcal{U}_{\epsilon} h\left[e^{-\epsilon}-e^{-T}\right] + \int_{\epsilon}^T ds\, \left[ e^{-s}- e^{-T}\right]\,\mathcal{U}_s Ah\\
&=&e^{-\epsilon}\,\mathcal{U}_{\epsilon} h-e^{-T}\left[\mathcal{U}_{\epsilon} h +  \int_{\epsilon}^T \mathcal{U}_s Ah\,ds\right]\\
&+&\int_{\epsilon}^T ds\,e^{-s}\,\mathcal{U}_s Ah.
\end{eqnarray*}
Using (\ref{u.mart}) and gathering all the terms together yields,
\begin{equation}\label{2.x}
\begin{split}
\int_{\epsilon}^T e^{-t} \mathcal{U}_t h\,dt
&=e^{-\epsilon}\,\mathcal{U}_{\epsilon} h-e^{-T} \mathcal{U}_T h\\
&+\int_{\epsilon}^T ds\,e^{-s}\,\mathcal{U}_s Ah.
\end{split}
\end{equation}
Let us focus on the quantity
\begin{equation*}
\int_{\epsilon}^T ds\,e^{-s}\,\mathcal{U}_s Ah.
\end{equation*}
Observing that,
%\begin{eqnarray*}
%&&\frac{1}{\epsilon}\left[\mathcal{U}_{\epsilon}-I\right]\int_0^t \mathcal{U}_s h(0,x_0)\,ds \\
%&=&\frac{1}{\epsilon}\int_0^t \left[\mathcal{U}_{s+\epsilon}h(0,x_0)-\mathcal{U}_{s}h(0,x_0)\right]\,ds\\
%&=&\frac{1}{\epsilon}\left[\int_\epsilon^{t+\epsilon} \mathcal{U}_{s}h(0,x_0)\,ds-\int_0^t\mathcal{U}_{s}h(0,x_0)\,ds\right]\\
%&=&\frac{1}{\epsilon}\int_\epsilon^{t+\epsilon} \mathcal{U}_{s}h(0,x_0)\,ds-\frac{1}{\epsilon}\int_0^\epsilon\mathcal{U}_{s}h(0,x_0)\,ds
%\end{eqnarray*}
%The right-continuity property of $\mathcal{U}_t$ on the space $\mathcal{D}^0$ implies that taking $\epsilon\to 0$, the right-hand side of the above equality converges to $\mathcal{U}_t h(0,x_0)- h(0,x_0)$. Hence, the limit of the left-hand side exists and
%$$
%\int_0^t \mathcal{U}_s h(0,x_0)\,ds
%$$
%belongs to the domain of $A$ and the following identity holds :
%\begin{equation*}
%\mathcal{U}_t h(0,x_0)- h(0,x_0)= A\,\int_0^t \mathcal{U}_s h(0,x_0)\,ds.
%\end{equation*}
%But also,
\begin{equation*}
\frac{1}{\epsilon}\left[\mathcal{U}_{t+\epsilon} h-\mathcal{U}_{t} h\right]
= \frac{1}{\epsilon} \left[\mathcal{U}_{\epsilon}-I\right]\mathcal{U}_t h
=\mathcal{U}_t\,\frac{1}{\epsilon} \left[\mathcal{U}_{\epsilon}-I\right]h,
\end{equation*}
taking $\epsilon\to 0$ yields
\begin{equation*}
\mathcal{U}_{t}  \frac{1}{\epsilon} \left[\mathcal{U}_{\epsilon}-I\right]h\to U_{t} Ah
\end{equation*}
Hence, the limit when $\epsilon\to 0$ of
\begin{equation*}
\frac{1}{\epsilon} \left[\mathcal{U}_{\epsilon}-I\right]\mathcal{U}_t h
\end{equation*}
exists, implying that $\mathcal{U}_th $ belongs to the domain of $A$ for any $h\in\mathcal{D}^0$. Thus,
\begin{equation*}
\int_{\epsilon}^T ds\,e^{-s}\,\mathcal{U}_sh
\end{equation*}
belongs to the domain of $\overline{A}$ and
\begin{equation*}
\int_{\epsilon}^T ds\,e^{-s}\,\mathcal{U}_s Ah=\overline{A}\int_{\epsilon}^T ds\,e^{-s}\,\mathcal{U}_s h.
\end{equation*}

%\begin{eqnarray*}
%&&\frac{1}{\epsilon}\left[\mathcal{U}_{t+\epsilon} h(0,x_0)-\mathcal{U}_{t} h(0,x_0)\right]\\
%&=&\frac{1}{\epsilon} A\,\int_0^t \mathcal{U}_s h(0,x_0)\,ds.
%\end{eqnarrau*}

Since $\mathcal{U}$ is a contraction semigroup, thus a contraction, and given the right-continuity property of $\mathcal{U}_t$ on the space $\mathcal{D}^0$, one may take $\epsilon\to 0$ and $T\to \infty$ in (\ref{2.x}), leading to
\begin{equation*}
\int_{0}^\infty e^{-t} \mathcal{U}_t h\,dt
=\mathcal{U}_{0} h(0,x_0)+\overline{A}\,\int_{0}^{\infty} ds\,e^{-s}\,\mathcal{U}_s h.
\end{equation*}
Thus,
\begin{equation*}
\left(I-\overline{A}\right)\,\int_{0}^{\infty} ds\,e^{-s}\,\mathcal{U}_s h(0,x_0)= \mathcal{U}_{0} h= h,
\end{equation*}
yielding $h\in Im(I-\overline{A})$. We have shown that $(\mathcal{U}_{t},t\geq 0)$ generates a strongly continuous contraction on $\mathcal{C}_0([0,T]\times\mathbb{R}^d)$ with infinitesimal generator $\overline{A}$ (see \cite[Theorem 2.2, Chapter 4]{ethierkurtz}).
%, defined by
%\begin{equation}
%\forall h\in\mathcal{C}^{1,2}_b([0,T]\times\mathbb{R}^d), \qquad
%\overline{A}h(t,x)= \left(\mathcal{L}_t h(t,.) \right) (x)+\frac{\partial h}{\partial t}(t,x).
%\end{equation}
\item  The Hille-Yosida theorem (see \cite[Proposition 2.6, Chapter 1]{ethierkurtz}) then implies  that for all $\lambda>0$
$$Im(\lambda-\overline{A})=\mathcal{C}_0([0,T]\times\mathbb{R}^d).$$

\item Now consider for $t\geq 0, h \in \mathcal{C}_b^0([0,T]\times\mathbb{R}^d)$,
\begin{equation}
 \mathcal{Q}_{t}h(x_0)= \int_{\mathbb{R}^d} \,q_{t}(x_0,dy)h(t,y) =  \left( \mathcal{U}_{t}h   \right) (0,x_0).
\end{equation}
Using  (\ref{eq.qt.bis}), we have, for  $\epsilon>0$,
 \begin{eqnarray}\label{semigroup}
\forall (f,\gamma)\in\mathcal{C}^1([0,T])\times \mathcal{C}_0^\infty(\mathbb{R}^d),\qquad
 \mathcal{Q}_{t}(f\gamma)(x_0)-\mathcal{Q}_{\epsilon}(f\gamma)(x_0) =\nonumber\\[0.1cm]
\int_{\epsilon}^{t} \int_{\mathbb{R}^d} q_u(x_0,dy)A(f\gamma)(u,y)\,du
=\int_{\epsilon}^{t} \mathcal{Q}_{u}(A(f\gamma))(x_0)\,du .
\end{eqnarray}
By linearity,  for any $h\in\mathcal{D}^0$ we have
\begin{equation}
 \mathcal{Q}_{t}h(x_0)-\mathcal{Q}_{\epsilon}h(x_0) =\int_{\epsilon}^{t} \int_{\mathbb{R}^d} q_u(x_0,dy)Ah(u,y)\,du=\int_{\epsilon}^{t} \mathcal{Q}_{u}Ah(x_0)\,du,
\end{equation}
%so using the fact that $\mathcal{C}^1([0,T])\otimes \mathcal{C}_0^\infty(\mathbb{R}^d) $ is dense in $\mathcal{C}_b^0([0,T]\times\mathbb{R}^d)$ for the uniform norm, we obtain that for any $h\in\mathcal{C}_b^0([0,T]\times\mathbb{R}^d)$,
%$(\mathcal{Q}_t)_{t\geq 0}$ thus defines a strongly continuous semigroup on $\mathcal{C}_b^0([0,T]\times \mathbb{R}^d)$.
Now let $p_t(x_0,dy)$ be another  solution of (\ref{kolmogore}) such that $p_0(x_0,dy)=\epsilon_{x_0}(dy)$. Then $p_t$ is also a solution of (\ref{eq.qt.bis}). An integration  by parts implies that, for $ (f,\gamma)\in\mathcal{C}^1([0,T])\times \mathcal{C}_0^\infty(\mathbb{R}^d),$
    \begin{equation}\label{eq.pt.bis}
      \int_{\mathbb{R}^d} p_t(x_0,dy)f(y)\gamma(t)= f(x_0)\gamma(0)+\int_0^t \int_{\mathbb{R}^d} p_s(x_0,dy)A(f\gamma)(s,y)\,ds.
    \end{equation}
Define, for $h\in\mathcal{C}_b^0([0,T]\times\mathbb{R}^d)$,
\begin{eqnarray*}
\forall (t,x_0)\in [0,T]\times \mathbb{R}^d,\qquad \mathcal{P}_{t} h(x_0)&=& \int_{\mathbb{R}^d} p_t(x_0,dy)h(t,y).
\end{eqnarray*}
Using (\ref{eq.pt.bis}) we have, for $(f,\gamma)\in\mathcal{C}^1([0,T])\times \mathcal{C}_0^\infty(\mathbb{R}^d)$,
\begin{equation}\label{semigroup.bis}
\forall \epsilon>0\quad \mathcal{P}_{t}(f\gamma)-\mathcal{P}_{\epsilon}(f\gamma) =\int_{\epsilon}^{t} \int_{\mathbb{R}^d} p_u(dy)A(f\gamma)(u,y)\,du=\int_{\epsilon}^{t} \mathcal{P}_{u}(A(f\gamma))\,du.
\end{equation}
which is identical to \eqref{semigroup}. Multiplying by $e^{-\lambda t}$ and integrating with respect to $t$ we obtain that, for $\lambda>0,$
\begin{eqnarray*}
\lambda \int_0^\infty e^{-\lambda t} \,\mathcal{P}_{t}(f\gamma)(x_0)\,dt&=&f(x_0)\gamma(0)+\lambda\int_0^\infty e^{-\lambda t}\int_0^t \mathcal{P}_{u}(A(f\gamma))(x_0)\,du\,dt\\[0.1cm]
&=&f(x_0)\gamma(0)+\lambda\int_0^\infty \left(\int_u^\infty e^{-\lambda t} dt\right)\, \mathcal{P}_{u}(A(f\gamma))(x_0)\,du\\[0.1cm]
&=&f(x_0)\gamma(0)+\int_0^\infty e^{-\lambda u}\, \mathcal{P}_{u}(A(f\gamma))(x_0)\,du.
\end{eqnarray*}
Similarly, from  \eqref{semigroup} we obtain for any $\lambda>0,$
\begin{eqnarray*}
\lambda \int_0^\infty e^{-\lambda t} \,\mathcal{Q}_{t}(f\gamma)(x_0)\,dt
&=&f(x_0)\gamma(0)+\int_0^\infty e^{-\lambda u}\, \mathcal{Q}_{u}(A(f\gamma))(x_0)\,du.
\end{eqnarray*}
Hence for $(f,\gamma)\in\mathcal{C}^1([0,T])\times \mathcal{C}_0^\infty(\mathbb{R}^d)$ we have
\begin{equation}\label{identity.semigroup}
\int_0^\infty e^{-\lambda t}\,\mathcal{Q}_{t}(\lambda-A)(f\gamma)(x_0)\,dt=f(x_0)\gamma(0)=\int_0^\infty e^{-\lambda t}\,\mathcal{P}_{t}(\lambda- A)(f\gamma)\,dt.
\end{equation}
By linearity,  for any $h\in\mathcal{D}^0$ we have
\begin{equation}
\int_0^\infty e^{-\lambda t}\,\mathcal{Q}_{t}(\lambda-A)h(x_0)\,dt=h(0,x_0)=\int_0^\infty e^{-\lambda t}\,\mathcal{P}_{t}(\lambda- A)h(x_0)\,dt
\end{equation}
Using the density of $\mathcal{D}^0$ in $\mathcal{C}_0([0,T]\times\mathbb{R}^d)$, we conclude that, for $h\in\mathcal{C}_0([0,T]\times\mathbb{R}^d)$
\begin{equation}\label{identity.semigroup.bis}
\int_0^\infty e^{-\lambda t}\,\mathcal{Q}_{t}(\lambda-\overline{A})h(x_0)\,dt=\int_0^\infty e^{-\lambda t}\,\mathcal{P}_{t}(\lambda- \overline{A})h(x_0)\,dt
\end{equation}
Finally, using the fact that $$Im(\lambda-\overline{A})=\mathcal{C}_0([0,T]\times\mathbb{R}^d),$$
we conclude that
\begin{equation}\label{identity.semigroup.tris}
\forall h\in \mathcal{C}_0([0,T]\times\mathbb{R}^d), \quad\int_0^\infty e^{-\lambda t}\,\mathcal{Q}_{t}h\,(x_0)\,dt=\int_0^\infty e^{-\lambda t}\,\mathcal{P}_{t}h(x_0) \,dt,
\end{equation}
so the Laplace transform of $t\mapsto \mathcal{P}_{t}h\,(x_0)$ is uniquely determined.

%Let us now show that $t\mapsto \mathcal{Q}_{t}h\,(0,x_0)$ is right-continuous.
%Furthermore $t\to b(t,\,.)$, $t\to a(t,\,.)$, and $t\to n(t,\,.\,,\,.)$ are bounded in $t$ on $[0,T]$ (Assumption \ref{A1}). It implies that for any fixed  $f\in \mathcal{C}^\infty_0(\mathbb{R}^d)$ and any fixed $\gamma\in\mathcal{C}^1([0,T])$, $t\to \mathcal{Q}_tA(f\gamma)(x_0)$ and $t\to P_tA(f\gamma)$ are bounded on $[0,T]$. \cite[Theorem 2.6]{ethierkurtz} implies also that the set $\{A(f\gamma),f\in \mathcal{C}^\infty_0(\mathbb{R}^d),\gamma\in\mathcal{C}^1([0,T]) \}$

Using \eqref{semigroup.bis},
\begin{eqnarray}\label{semigroup.tris}
\forall \epsilon>0, \forall h\in\mathcal{D}^0,\nonumber\\[0.1cm]
 \mathcal{P}_{t}h-\mathcal{P}_{\epsilon}h =\int_{\epsilon}^{t} \int_{\mathbb{R}^d} p_u(dy)Ah(u,y)\,du=\int_{\epsilon}^{t} \mathcal{P}_{u}(Ah)\,du
\end{eqnarray}
by linearity, which allows to show that, for any $h\in\mathcal{D}^0,$
$ t\mapsto \mathcal{P}_{t}h(x_0)$ is right-continuous:
$$
\forall h\in \mathcal{D}^0,\qquad \lim_{t' \downarrow t} \mathcal{P}_{t'}h(x_0)=\mathcal{P}_th(x_0).
$$
An identical argument using \eqref{semigroup.bis} shows that $
t\mapsto \mathcal{Q}_{t}h(x_0)$ is right-continuous. These two
right-continuous functions have the same Laplace transform by
\eqref{identity.semigroup.tris}, so they are equal. Thus we have
shown that
\begin{equation}
\forall  h\in\mathcal{D}^0,\quad \int h(t,y) q_{t}(x_0,dy)=
\int h(t,y) p_t(x_0,dy). \label{eq.uniqueness}
\end{equation}
Since $\mathcal{D}^0$ is dense in
$\mathcal{C}_0([0,T]\times\mathbb{R}^d)$  for the supremum norm,
\eqref{eq.uniqueness} also holds for $h\in
\mathcal{C}_0([0,T]\times\mathbb{R}^d)$. By \cite[Proposition 4.4,
Chapter 3]{ethierkurtz}, $\mathcal{C}_0([0,T]\times\mathbb{R}^d)$ is
convergence determining, hence separating, allowing us to conclude
that $p_t(x_0,dy)=q_t(x_0,dy)$.
\end{enumerate}
\end{proof}

\begin{remark}
Assumptions \ref{A1}, \ref{A2} and \ref{C} are sufficient but not necessary for the well-posedness of the martingale problem. For example, the boundedness Assumption \ref{A1} may be relaxed to  local boundedness, using localization techniques developed in \cite{stroock75,stroockvaradhan}. Such extensions are not trivial and, in the unbounded case, additional conditions are needed to ensure that $X$ does not explode (see \cite[Chapter 10]{stroockvaradhan}).
\end{remark}

\subsection{Markovian projection of a semimartingale}\label{markovianproj.sec}

The following assumptions on the local characteristics of the semimartingale $\xi$ are almost-sure analogs of Assumptions \ref{A1}, \ref{A2} and \ref{C}:
\begin{assumption}\label{H1} $\beta,\delta$ are  bounded on $[0,T]$:
\begin{equation*}
 \exists K_1>0, \forall t\in[0,T],\:\: \|\beta_t\|\leq K_1,\:\: \|\delta_t\|\leq K_1\qquad \mathrm{a.s.}
\end{equation*}
\end{assumption}
\begin{assumption}\label{H2} The jump compensator
 $\mu$ has a density $m(\omega,t,dy)$ with respect to the
Lebesgue measure on $[0,T]$ which satisfies
\begin{eqnarray*}
(i)&\quad&\exists K_2>0, \forall t\in[0,T]\quad \int_{\mathbb{R}^d} \left(1\wedge \|y\|^2\right)\,m(.,t,dy)\leq K_2 <\infty\qquad\mathrm{a.s.}\\[0.1cm]
(ii)&\mathrm{and}&\lim_{R\to\infty} \int_0^T m\left(.,t,\{\|y\|\geq R\}\right)\,dt=0 \quad\mathrm{a.s.}
\end{eqnarray*}
\end{assumption}
\begin{assumption}\label{H3}
\begin{eqnarray*}
  &\mathrm{Either}& (i)\quad\exists \epsilon>0,\forall t\in [0,T[\,\, {}^t\delta_t\delta_t\geq \epsilon\ I_d \quad\mathrm{a.s.} \label{elliptic.eq}\\[0.1cm]
  &{\rm or} & (ii) \quad \delta\equiv 0\quad{\rm and\ there\ exists\,}\beta\in]0,2[, c,K_3>0,\,\mathrm{and\ a\
 family}\,m^\beta(t,dy)\\[0.1cm]
&& \mathrm{of \ positive\:measures\  such\ that}\\[0.1cm]
  &&\forall t\in [0,T[\quad \quad m(t,dy)=m^\beta(t,dy)+\frac{c}{\|y\|^{d+\beta}}\,dy\:\mathrm{a.s.},  \\[0.1cm]
  &&\quad \int \left(1\wedge \|y\|^{\beta}\right)\,m^\beta(t,dy) \leq K_3,\quad {\rm and}\quad \lim_{\epsilon\to 0} \int_{\|y\|\leq \epsilon} \|y\|^{\beta}\,m^\beta(t,dy)=0\:\mathrm{a.s.}
\end{eqnarray*}
\end{assumption}
Note that Assumption \ref{H2} is only slightly stronger than stating that $m$ is
a L\'evy kernel since in that case we already have   $\int \left(1\wedge
\|y\|^2\right)\,m(.,t,dy) <\infty$. Assumption \ref{H3} extends the ``ellipticity" assumption to the case of pure-jump semimartingales and holds for a large class of semimartingales driven by stable or tempered stable processes.
\begin{theorem}[Markovian projection]\label{th1}
Assume there exists measurable functions $a:[0,T]\times \mathbb{R}^d\mapsto M_{d\times d}(\mathbb{R})$, $b:[0,T]\times \mathbb{R}^d\mapsto \mathbb{R}^d$ and \\
$n:[0,T]\times \mathbb{R}^d\mapsto \mathcal{R}(\mathbb{R}^d-\{0\})$ satisfying Assumption \ref{A2} such that for all $t\in[0,T]$ and $B\in\mathcal{B}(\mathbb{R}^d-\{0\}),$
  \begin{equation}
    \begin{split}
      &\mathbb{E}\left[\beta_t|\xi_{t^-}\right]=b(t,\xi_{t-})\quad\mathrm{a.s},\\[0.1cm]
      &\mathbb{E}\left[{}^t\delta_t\delta_t|\xi_{t^-}\right]=a(t,\xi_{t-})\quad\mathrm{a.s},\\[0.1cm]
      &\mathbb{E}\left[m(.,t,B)|\xi_{t^-}\right]=n(t,B,\xi_{t-})\quad\mathrm{a.s}.
    \end{split}
  \end{equation}
If $(\beta,\delta,m)$ satisfies Assumptions  \ref{H1}, \ref{H2}, \ref{H3}, then  there exists a Markov process $((X_t)_{t\in[0,T]},\mathbb{Q}_{\xi_0})$, with infinitesimal
generator $\mathcal{L}$ defined by \eqref{non.deg.op}, whose   marginal distributions  mimick those of $\xi$:
  $$\forall t\in [0,T], \quad X_t\overset{\underset{\mathrm{d}}{}}{=}\xi_t. $$
  $X$ is the  weak solution  of the stochastic
differential equation
\begin{equation} \label{y.sde}
  \begin{split}
    X_t&=\xi_0+\int_0^t b(u,X_u)\,du+\int_0^t \Sigma(u,X_u)\,dB_u\\[0.1cm]
    &+\int_0^t\int_{\|y\|\leq 1}y \,\tilde{N}(du\ dy)+ \int_0^t\int_{\|y\|> 1}y \,N(du\ dy),
  \end{split}
  \end{equation}
where  $(B_t)$ is an n-dimensional Brownian motion, $N$ is an integer-valued
  random measure on $[0,T]\times\mathbb{R}^d$ with compensator
  $n(t,dy,X_{t-})\,dt$, $\tilde{N}=N-n$ the associated
compensated random measure and $\Sigma\in C^0([0,T]\times\mathbb{R}^d, M_{d\times n}(\mathbb{R}))$
 such that
$ {}^t\Sigma(t,z)\Sigma(t,z) = a(t,z) .$
\end{theorem}
We will call $(X,\mathbb{Q}_{\xi_0})$ the {\it Markovian projection} of $\xi$.
\begin{proof}
First, we observe that $n$ is a L\'evy kernel : for any $(t,z)\in[0,T]\times\mathbb{R}^d$
    \begin{eqnarray*}
      \int_{\mathbb{R}^d} \left(1\wedge \|y\|^2\right)\,n(t,dy,z)=
 \mathbb{E}\left[ \int_{\mathbb{R}^d} \left(1\wedge \|y\|^2\right)\,m(t,dy)|\xi_{t^-}=z\right]\ <\infty\:\:\mathrm{a.s.},
    \end{eqnarray*}
using Fubini's theorem and Assumption \ref{H2}.
% Together with the asumption of continuity in $z$ uniform in $t$, these conditions imply the existence of a solution $(\ (X_t)_{t\in[0,T]},\mathbb{Q}_{X_0})$ ( $(X_t)$ corresponds to the canonical process under $\mathbb{Q}_{X_0})$) to the martingale problem $(L,\mathcal{L}(X_0))$, where $\mathcal{L}(X_0)$ denotes the law of $X_0$ and $L$ is defined by (\ref{L1.eq}) on the domain $C^\infty_0(\mathbb{R}^d)$ (see \cite[Theorem 2.2]{stroock75}).Furthermore, if
%then  $(\ (X_t)_{t\in[0,T]},\mathbb{Q}_{X_0})$ is  a strong Markov process (see \cite[Theorem 4.3]{stroock75}) and the law $\mathbb{Q}_{X_0}$ is unique.
Consider now the case of a pure jump semimartingale verifying (ii) and define, for $B\in\mathcal{B}(\mathbb{R}^d -\{0\})$,
\begin{equation*}
\forall z\in\mathbb{R}^d\quad n^{\beta}(t,B,z)=\mathbb{E}\left[\int_{B} m(t,dy,\omega)-\frac{c\,dy}{\|y\|^{d+\beta}}|\xi_{t^-}=z\right].
\end{equation*}
As argued above, $n^\beta$ is a L\'{e}vy kernel on $\mathbb{R}^d$.
Assumptions \ref{H1} and \ref{H2} imply that $(b,a,n)$ satisfies Assumption \ref{A1}. Furthermore, under assumptions either $(i)$ or $(ii)$ for $(\delta,m)$, Assumption \ref{C} holds for $(b,a,n)$. Together with Assumption \ref{A2} yields that $\mathcal{L}$ is a non-degenerate operator and Proposition \ref{th.well.posed} implies that the
martingale problem for $(\mathcal{L}_t)_{t\in [0,T]}$ on the domain $\mathcal{C}_0^\infty(\mathbb{R}^d)$ is well-posed.
Denote  $((X_t)_{t\in [0,T]},\mathbb{Q}_{\xi_0})$ its unique solution starting from $\xi_0$ and $q_t(\xi_0,dy)$ the marginal distribution of $X_t$.
Let $f\in \mathcal{C}_0^\infty(\mathbb{R}^d)$. It\^{o}'s formula
yields
    \begin{eqnarray*}
      f(\xi_t)&=&f(\xi_0)+\sum_{i=1}^d\int_0^t \sum_{i=1}^d \frac{\partial f}{\partial x_i}(\xi_{s^-})\,d\xi_s^i+\frac{1}{2}\int_0^t
      {\rm tr}\left[\nabla^2 f(\xi_{s^-})\,{}^t\delta_s\delta_s\right]\,ds\\[0.1cm]
      &+&\sum_{s\leq t} \left[f(\xi_{s^-}+\Delta
        \xi_s)-f(\xi_{s^-})-\sum_{i=1}^d \frac{\partial f}{\partial x_i}(\xi_{s^-})\Delta  \xi_s^i \right]\\[0.1cm]
      &=&f(\xi_0)+\int_0^t \nabla f(\xi_{s^-}).\beta_{s}\,ds+
  \int_0^t \nabla f(\xi_{s^-}).\delta_{s}dW_s\\[0.1cm]
  &+&\frac{1}{2}\int_0^t {\rm tr}\left[\nabla^2 f(\xi_{s^-})\,{}^t\delta_s\delta_s\right]\ \,ds +\int_0^t\int_{\|y\|\leq 1}\nabla f(\xi_{s^-}).y\,\tilde{M}(ds\,dy)\\[0.1cm]
  &+&\int_0^t\int_{\mathbb{R}^d}\left(f(\xi_{s^-}+y)-f(\xi_{s^-})-1_{\{\|y\|\leq1\}}\,y.\nabla
    f(\xi_{s^-})\right)\,M(ds\,dy).
    \end{eqnarray*}
 We note that
\begin{itemize}
\item since $\left\|\nabla f\right\|$ is
bounded  $\int_0^t\int_{\|y\|\leq 1}\nabla f(\xi_{s^-}).y\,\tilde{M}(ds\,dy)$ is  a square-integrable martingale.
\item  $\int_0^t\int_{\|y\|> 1}\nabla
  f(\xi_{s^-}).y\,{M}(ds\,dy)< \infty\, \mathrm{a.s.}$
since $\left\|\nabla f\right\|$ is bounded.
%\item%$\int_0^t\int_{\mathbb{R}^d}\left(f(\xi_{s^-}+y)-f(\xi_{s^-})-y.\nabla f(\xi_{s^-})\right)\,M(ds\,dy)$
 % is well defined since:
 % $$\left|f(\xi_{s^-}+y)-f(\xi_{s^-})-y.\nabla f(\xi_{s^-})\right|\leq \|\nabla f\|\frac{\|y\|^2}{2}$$ and  $y\to y^2$ is integrable with respect to $\mu$.
\item since $\nabla f(\xi_{s^-})$  and $\delta_s$ are uniformly bounded on $[0,T]$, $\int_0^t \nabla f(\xi_{s^-}).\delta_{s} dW_s$ is a martingale.
\end{itemize}
Hence, taking  expectations, we obtain:
\begin{eqnarray*}
  \mathbb{E}^{\mathbb{P}}\left[f(\xi_t)\right]&=&\mathbb{E}^{\mathbb{P}}\left[f(\xi_0)\right]+\mathbb{E}^{\mathbb{P}}\left[\int_0^t \nabla f(\xi_{s^-}).\beta_{s}\,ds\right]+ \mathbb{E}^{\mathbb{P}}\left[\frac{1}{2}\int_0^t {\rm tr}\left[\nabla^2 f(\xi_{s^-}){}^t\delta_s\delta_s\right]\,ds\right]\\[0.1cm]
  &+&\mathbb{E}^{\mathbb{P}}\left[\int_0^t\int_{\mathbb{R}^d}\left(f(\xi_{s^-}+y)-f(\xi_{s^-})-1_{\{\|y\|\leq1\}}\,y.\nabla
    f(\xi_{s^-})\right)\,M(ds\,dy)\right]\\[0.1cm]
&=&\mathbb{E}^{\mathbb{P}}\left[f(\xi_0)\right]+\mathbb{E}^{\mathbb{P}}\left[\int_0^t \nabla
  f(\xi_{s^-}).\beta_{s}\,ds\right]+\mathbb{E}^{\mathbb{P}}\left[\frac{1}{2}\int_0^t {\rm tr}\left[\nabla^2
  f(\xi_{s^-})\,{}^t\delta_s\delta_s\right]\,ds\right]\\[0.1cm]
&+&\mathbb{E}^{\mathbb{P}}\left[\int_0^t\int_{\mathbb{R}^d}\left(f(\xi_{s^-}+y)-f(\xi_{s^-})-1_{\{\|y\|\leq1\}}\,y.\nabla
    f(\xi_{s^-})\right)\,m(s,dy)\,ds\right].
\end{eqnarray*}
Observing that:
\begin{eqnarray*}
  &&\mathbb{E}^{\mathbb{P}}\left[\int_0^t \nabla f(\xi_{s^-}).\beta_{s}\,ds\right]\leq
  \|\nabla f\| \,\mathbb{E}^{\mathbb{P}}\left[\int_0^t\|\beta_s\|\,ds\right]<\infty,\\[0.1cm]
  &&\mathbb{E}^{\mathbb{P}}\left[\frac{1}{2}\int_0^t {\rm tr}\left[\nabla^2 f(\xi_{s^-})\,{}^t\delta_s\delta_s\right]\right]\leq \|\nabla^2f\|\, \mathbb{E}^{\mathbb{P}}\left[\int_0^t
    \|\delta_s\|^2\,ds\right]<\infty, \\[0.1cm]
  &&\mathbb{E}^{\mathbb{P}}\left[ \int_0^t\int_{\mathbb{R}^d} \left\| f(\xi_{s^-}+y)-f(\xi_{s^-})-1_{\{\|y\|\leq1\}}\,y.\nabla
    f(\xi_{s^-})\right\|\,m(s,dy)\,ds\right]\\[0.1cm]
      &&\quad \leq\frac{\|\nabla^2f\|}{2}\mathbb{E}^{\mathbb{P}}\left[\int_0^t\int_{\|y\|\leq 1}\|y\|^2\,m(s,dy)\,ds\right]+ 2 \|f\|\mathbb{E}^{\mathbb{P}}\left[\int_0^t\int_{\|y\|> 1}\,m(s,dy)\,ds\right]<+\infty,
    \end{eqnarray*}
        we may apply Fubini's theorem to obtain
        \begin{eqnarray*}
          \mathbb{E}^{\mathbb{P}}\left[f(\xi_t)\right]&=&\mathbb{E}^{\mathbb{P}}\left[f(\xi_0)\right]+\int_0^t \mathbb{E}^{\mathbb{P}}\left[\nabla
            f(\xi_{s^-}).\beta_{s}\right]\,ds + \frac{1}{2}\int_0^t \mathbb{E}^{\mathbb{P}}\left[{\rm tr}\left[\nabla^2
            f(\xi_{s^-})\,{}^t\delta_s\delta_s\right]\right] \,ds\\[0.1cm]
          &+&\int_0^t\mathbb{E}^{\mathbb{P}}\left[\int_{\mathbb{R}^d}\left(f(\xi_{s^-}+y)-f(\xi_{s^-})-1_{\{\|y\|\leq1\}}\,y.\nabla
              f(\xi_{s^-})\right)\,m(s,dy)\right]\,ds.
        \end{eqnarray*}
    Conditioning on $\xi_{t-}$ and using the iterated expectation
    property,
    \begin{eqnarray*}
      \mathbb{E}^{\mathbb{P}}\left[f(\xi_t)\right]&=&\mathbb{E}^{\mathbb{P}}\left[f(\xi_0)\right]+\int_0^t \mathbb{E}^{\mathbb{P}}\left[\nabla
        f(\xi_{s^-}). \mathbb{E}^{\mathbb{P}}\left[\beta_{s}|\xi_{s-}\right]\right]\,ds \\[0.1cm]
&+&\frac{1}{2}\int_0^t \mathbb{E}^{\mathbb{P}}\left[{\rm tr}\left[\nabla^2
            f(\xi_{s^-})\, \mathbb{E}^{\mathbb{P}}\left[{}^t\delta_s\delta_s|\xi_{s-}\right]\right]]\right] \,ds\\[0.1cm]
      &+&\int_0^t\mathbb{E}^{\mathbb{P}}\left[\mathbb{E}^{\mathbb{P}}\left[\int_{\mathbb{R}^d}\left(f(\xi_{s^-}+y)-f(\xi_{s^-})-1_{\{\|y\|\leq1\}}\,y.\nabla
          f(\xi_{s^-})\right)\,m(s,dy)|\xi_{s-}\right]\right]\,ds\\[0.1cm]
      &=&\mathbb{E}^{\mathbb{P}}\left[f(\xi_0)\right]+\int_0^t \mathbb{E}^{\mathbb{P}}\left[\nabla
        f(\xi_{s^-}).b(s,\xi_{s-})\right]\,ds
      +\frac{1}{2}\int_0^t \mathbb{E}^{\mathbb{P}}\left[{\rm tr}\left[\nabla^2
          f(\xi_{s^-})\,a(s,\xi_{s-})\right]\right] \,ds\\[0.1cm]
      &+&\int_0^t\int_{\mathbb{R}^d}\mathbb{E}^{\mathbb{P}}\left[\left(f(\xi_{s^-}+y)-f(\xi_{s^-})-1_{\{\|y\|\leq1\}}\,y.\nabla
        f(\xi_{s^-})\right)\,n(s,dy,\xi_{s-})\right]\,ds.
      \end{eqnarray*}
    Hence
    \begin{equation}\label{eq.pt}
      \mathbb{E}^{\mathbb{P}}\left[f(\xi_t)\right]=\mathbb{E}^{\mathbb{P}}\left[f(\xi_0)\right]+\mathbb{E}^{\mathbb{P}}\left[\int_0^t \mathcal{L}_sf(\xi_{s-})\,ds\right].
    \end{equation}
Let $p_t(dy)$ denote the law of $(\xi_t)$ under $\mathbb{P}$, (\ref{eq.pt}) writes:
    \begin{equation}
      \int_{\mathbb{R}^d} p_t(dy)f(y)= \int_{\mathbb{R}^d} p_0(dy)f(y)+\int_0^t \int_{\mathbb{R}^d} p_s(dy)\mathcal{L}_s f(y)\,ds.
    \end{equation}
Hence $p_t(dy)$ satisfies the Kolmogorov forward equation (\ref{kolmogore}) for the operator $\mathcal{L}$ with the initial condition $p_0(dy)=\mu_0(dy)$ where $\mu_0$ denotes the law of $\xi_0$.
Applying Theorem \ref{evolution.equation}, the flows $q_{t}(\xi_0,dy)$ of $X_t$ and $p_t(dy)$ of $\xi_t$ are the same on $[0,T]$. This ends the proof.

\end{proof}
\begin{remark}[Mimicking conditional distributions]
The construction in Theorem \ref{th1} may also be carried out using
  \begin{equation*}
    \begin{split}
      \mathbb{E}\left[\beta_t|\xi_{t^-},\,\mathcal{F}_0\right]&=b_0(t,\xi_{t-})\:\mathrm{a.s},\\[0.1cm]
      \mathbb{E}\left[{}^t\delta_t\delta_t|\xi_{t^-},\,\mathcal{F}_0\right]&=a_0(t,\xi_{t-})\:\mathrm{a.s},\\[0.1cm]
      \mathbb{E}\left[m(.,t,B)|\xi_{t^-},\,\mathcal{F}_0\right]&=n_0(t,B,\xi_{t-})\:\mathrm{a.s},
    \end{split}
  \end{equation*}
 instead of $(b,a,n)$ in \eqref{new_param}. If $(b_0,a_0,n_0)$ satisfies Assumption \eqref{C},
then following the same procedure we can construct a Markov process $(X,\mathbb{Q}^0_{\xi_0})$ whose infinitesimal generator has coefficients  $(b_0,a_0,n_0)$ such that
$$ \forall f\in\mathcal{C}_b^0(\mathbb{R}^d), \forall t\in[0,T]\quad \mathbb{E}^{\mathbb{P}}\left[f(\xi_t)|\mathcal{F}_0\right]=\mathbb{E}^{\mathbb{Q}^0_{\xi_0}}\left[f(X_t)\right] ,$$
i.e. the marginal distribution of $X_t$ matches the  conditional distribution of $\xi_t$ given $\mathcal{F}_0$.
\end{remark}
\begin{remark} For Ito processes (i.e. continuous semimartingales of the form \eqref{classeJ} with $\mu=0$),
 Gy\"ongy \cite[Theorem 4.6]{gyongy86} gives a ``mimicking theorem"
under the non-degeneracy condition $ {}^t \delta_t.\delta_t\geq \epsilon I_d$ which corresponds to our Assumption \ref{H3}, but without requiring the continuity condition (Assumption \ref{A2}) on $(b,a,n)$.
Brunick \& Shreve \cite{brunick2010} extend this result by relaxing the  ellipticity condition of \cite{gyongy86}.
In both cases, the mimicking process $X$ is constructed as a weak solution to the SDE \eqref{y.sde} (without the jump term), but this weak solution does {\it not} in general have the Markov property: indeed, it need not even be unique under the assumptions used in \cite{gyongy86,brunick2010}. In particular, in the setting used in \cite{gyongy86,brunick2010}, the law of $X$ is not uniquely determined by its 'infinitesimal generator' ${\cal L}$. This makes it difficult to `compute' quantities involving $X$, either through simulation or by solving a partial differential equation.

By contrast, under the additional continuity condition \ref{A2} on the projected coefficients,  $X$ is  a Markov process whose law is uniquely determined by its infinitesimal generator ${\cal L}$ and whose marginals are the unique solution of the Kolmogorov forward equation \eqref{kolmogore}.
This makes it possible to compute the marginals of $X$ by simulating the SDE \eqref{y.sde} or by solving a forward PIDE.

It remains to be seen whether the additional Assumption \ref{A2}  is verified in most examples of interest. We will show in Section \ref{examples.sec} that this is indeed the case.
\end{remark}

\begin{remark}[Markovian projection of a Markov process]
The term {\rm Markovian projection} is justified by the following remark: if the semimartingale $\xi$
is already a Markov process and satisfies the assumption of Theorem \ref{th1}, then the uniqueness in law of the solution to the martingale problem for ${\cal L}$ implies that the  Markovian projection $(X,\mathbb{Q}_{\xi_0})$ of $\xi$ has the same law as $(\xi,\mathbb{P}_{\xi_0})$. So the map which associates (the law $\mathbb{Q}_{\xi_0}$ of) $X$ to $\xi$ may indeed be viewed as a {\it projection}; in particular it is involutive.

This property contrasts with other constructions of
 mimicking processes
\cite{bakeryor09,contminca08,gyongy86,hamza06,madanyor02} which fail to be involutive. A striking example is the construction, by Hamza \& Klebaner \cite{hamza06}, of discontinuous martingales whose marginals match those of a Gaussian Markov process.
\end{remark}

\subsection{Forward equations for semimartingales}\label{fokkerplanck.sec}
Theorem \ref{evolution.equation} and Theorem \ref{th1} allow us to obtain a  forward PIDE which extends the Kolmogorov forward equation to   semimartingales which verify the Assumptions of Theorem \ref{th1}:
\begin{theorem}\label{forward.thm}
Let $\xi$ be a semimartingale given by \eqref{classeJ} satisfying the assumptions of Theorem \ref{th1}.
Denote
$p_{t}(dx)$  the law of $\,\xi_t$ on $\mathbb{R}^d$. The  $(p_{t})_{t\in [0,T]}$ is the unique solution, in the sense of
distributions, of the forward equation
\begin{equation}
\forall t\in[0,T],\quad\frac{\partial p_{t}}{\partial
  t}= \mathcal{L}^{\star}_t.\,p_{t},
\end{equation}
with initial condition $p_{0}=\mu_0$, where $\mu_0$ denotes the law of $\xi_0$,\\%=\mathcal{L}(X_0)$.\\
where $\mathcal{L}^{\star}$ is the adjoint  of $\mathcal{L}$, defined by
\begin{eqnarray}\forall g &\in&C_0^\infty(\mathbb{R}^d,\mathbb{R}),\nonumber\\[0.1cm]
  \mathcal{L}^{\star}_t g(x) &=&-\nabla\left[b(t,x)g(x)\right]+\nabla^2\left[\frac{a(t,x)}{2}g(x)\right]\label{adjoint.eq}\\[0.1cm]
&+& \int_{\mathbb{R}^d}\left[g(x-z)n(t,z,x-z)-g(x)n(t,z,x)-1_{\|z\|\leq 1}z.\nabla\left[g(x)n(t,dz,x)\right]\right],\nonumber
\end{eqnarray}
where the coefficients $b,a,n$ are defined as in \eqref{new_param}.
\end{theorem}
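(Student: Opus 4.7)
The plan is to combine the Markovian projection of Theorem \ref{th1} with the uniqueness result of Theorem \ref{evolution.equation}, and then to recognize the resulting weak Kolmogorov forward equation as the distributional form of $\partial_t p_t=\mathcal{L}^\star_t p_t$ by computing the formal adjoint of $\mathcal{L}_t$.

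By Theorem \ref{th1}, there exists a Markov process $(X,\mathbb{Q}_{\xi_0})$ with infinitesimal generator $\mathcal{L}$ of the form \eqref{non.deg.op}, whose coefficients $(b,a,n)$ are given by \eqref{new_param} and whose one-dimensional marginals coincide with those of $\xi$. In particular the law of $X_t$ under $\mathbb{Q}_{\xi_0}$ is exactly $p_t$. As shown during the proof of Theorem \ref{th1}, the triple $(b,a,n)$ satisfies Assumptions \ref{A1}, \ref{A2} and \ref{C}, so Theorem \ref{evolution.equation} applies and identifies $(p_t)_{t\in[0,T]}$ as the \emph{unique} family of bounded measures on $\mathbb{R}^d$ with $p_0=\mu_0$ such that, for every $f\in\mathcal{C}_0^\infty(\mathbb{R}^d)$,
\[
\frac{d}{dt}\int_{\mathbb{R}^d} f(y)\,p_t(dy)=\int_{\mathbb{R}^d} \mathcal{L}_t f(y)\,p_t(dy).
\]

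To match this with the PIDE in the theorem, I need to verify that the right-hand side above equals $\langle \mathcal{L}^\star_t p_t, f\rangle$ with $\mathcal{L}^\star_t$ given by \eqref{adjoint.eq}. The diffusion part is standard: one and two integrations by parts convert $b\cdot\nabla f$ and $\tfrac12\sum_{i,j} a_{ij}\partial^2_{ij}f$ into $-\nabla\cdot(bg)$ and $\nabla^2(ag/2)$ respectively. For the non-local part I would split the integrand $f(x+y)-f(x)-\mathbf{1}_{\|y\|\leq 1}\,y\cdot\nabla f(x)$ into its three pieces. The translation term $\int g(x)f(x+y)n(t,dy,x)\,dx$ is treated by the change of variables $x'=x+y$, $z=-y$, which carries the shift both onto $g$ and onto the $x$-argument of $n$, producing the contribution $g(x-z)n(t,dz,x-z)$ in the adjoint. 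The $-f(x)$ piece contributes $-g(x)n(t,dz,x)$ directly, and the truncation correction is transferred from $f$ onto $g\cdot n$ by integration by parts in $x$, yielding the remaining divergence term in \eqref{adjoint.eq}.

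Once this adjoint identity is in place, the weak equation of Theorem \ref{evolution.equation} is exactly $\langle \partial_t p_t,f\rangle=\langle p_t,\mathcal{L}_t f\rangle=\langle\mathcal{L}^\star_t p_t,f\rangle$ for every test function $f\in\mathcal{C}_0^\infty(\mathbb{R}^d)$, which is precisely the distributional statement $\partial_t p_t=\mathcal{L}^\star_t p_t$ with initial condition $p_0=\mu_0$. Uniqueness of $p_t$ as a distributional solution then follows from the uniqueness part of Theorem \ref{evolution.equation}. The main technical care is in the adjoint computation for the jump operator: because the kernel $n(t,dy,x)$ depends on $x$, the translation in $f$ drags this $x$-dependence along, and it is this phenomenon that produces the non-standard factor $n(t,dz,x-z)$ appearing in \eqref{adjoint.eq}.
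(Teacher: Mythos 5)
Your proof takes essentially the same approach as the paper: combine Theorem \ref{th1} (Markovian projection) with Theorem \ref{evolution.equation} (well-posedness of the weak Kolmogorov forward equation), and then identify $\mathcal{L}^\star_t$ by integration by parts on the differential terms together with the change of variables in the jump integral. Your observation that the $x$-dependence of the kernel $n(t,dy,x)$ is carried along by the translation, producing the shifted factor $n(t,dz,x-z)$, is precisely what the paper encodes via the translation operator $\tau_{-z}$ acting on $p_t(dx)\,n(t,dz,x)$.
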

\begin{proof}
The existence and uniqueness is a direct consequence of Theorem \ref{evolution.equation} and Theorem \ref{th1}. To finish the proof, let compute $\mathcal{L}_t^\star$.
Viewing $p_{t}$ as an element of the dual of $C^\infty_0(\mathbb{R}^d)$, (\ref{kolmogore}) rewrites : for $f \in C^\infty_0(\mathbb{R}^d,\mathbb{R})$
\begin{equation*}
\forall f\in  \mathcal{C}_0^\infty(\mathbb{R}^d,\mathbb{R}),\qquad\int
f(y)\frac{dp}{dt}(dy)=\int p_t(dy)\mathcal{L}_tf(y).
\end{equation*}
We have
$$ \forall f \in C^\infty_0(\mathbb{R}^d), \forall t\leq t'<T\quad <\frac{p_{t'}-p_t}{t'-t},f>\mathop{\to}^{t'\to t}<p_t,\mathcal{L}_tf>= <\mathcal{L}_t^*p_t,f>,$$
where $<.,.>$ is the duality product.

For $z\in \mathbb{R}^d$, define the translation operator $\tau^z$  by $\tau_zf(x)=f(x+z)$. Then
\begin{eqnarray*}
&&\int p_{t}(dx) \,\mathcal{L}_tf(x)\\[0.1cm]
&=&\int p_{t}(dx)\,\Big[b(t,x)\nabla f(x)+\frac{1}{2}{\rm tr}\left[\nabla^2 f(x)\,a(t,x)\right]\\[0.1cm]
&&\quad\quad\quad\quad+ \int_{|z|>1}(\tau_zf(x)-f(x))n(t,dz,x)\\[0.1cm]
&&\quad\quad\quad\quad+ \int_{|z|\leq1}(\tau_zf(x)-f(x)-z.\nabla f(x))\,n(t,dz,x)\Big]\\[0.1cm]
&=&\int \Big[-f(x)\frac{\partial }{\partial
  x}[b(t,x)p_{t}(dx)]+f(x)\frac{\partial^2 }{\partial
  x^2}[\frac{a(t,x)}{2}p_{t}(dx)]\\[0.1cm]
&&\quad\quad\quad\quad+ \int_{|z|>1} f(x)(\tau_{-z}(p_{t}(dx)n(t,dz,x))-p_{t}(dx)n(t,dz,x))\\[0.1cm]
&&\quad\quad\quad\quad+ \int_{|z|\leq1}
f(x)(\tau_{-z}(p_{t}(dx)n(t,dz,x))-p_{t}(dx)n(t,dz,x))\\[0.1cm]
&&\quad\quad\quad\quad-z\frac{\partial }{\partial
  x}(p_{t}(dx)n(t,dz,x))\Big],
\end{eqnarray*}
allowing to identify $\mathcal{L}^{\star}$.
\end{proof}

\subsection{Martingale-preserving property}
An important property of the construction of $\xi$ in Theorem
\ref{th1} is that it preserves the (local) martingale property: if $\xi$ is a local martingale, so is $X$:
\begin{proposition}[Martingale preserving property]\label{martingalepres.prop}\ \\[0.1cm]
\begin{enumerate}\item If $\xi$ is a {\rm local martingale}  which satisfies the assumptions of
Theorem \ref{th1},
then its Markovian projection $(X_t)_{t\in[0,T]}$ is  a local
martingale on $(\Omega_0,{\cal B}_t,\mathbb{Q}_{\xi_0})$.\item If furthermore
$$\mathbb{E}^{\mathbb{P}}\left[\int_0^T \int_{\mathbb{R}^d}\|y\|^2 m(t,dy) dt \right]<\infty,$$
then $(X_t)_{t\in[0,T]}$ is a square-integrable martingale.\end{enumerate}
\end{proposition}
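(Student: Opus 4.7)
The plan is to exploit the semimartingale characteristics of $\xi$: if $\xi$ is a local martingale, then its drift must compensate the ``big jump'' part, and this cancellation passes to the Markovian projection through the conditional expectation.

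First I would rewrite the decomposition \eqref{classeJ} of $\xi$ after bringing the large jumps into the compensated form. Assuming $\int_{\|y\|>1}\|y\|\,m(\cdot,t,dy)<\infty$ (which is granted by the integrability ingredient needed for $\xi$ to be special), we get
\begin{equation*}
  \xi_t=\xi_0+\int_0^t\!\!\Bigl(\beta_s+\!\!\int_{\|y\|>1}\!\!\!y\,m(s,dy)\Bigr)ds+\int_0^t\!\!\delta_s\,dW_s+\int_0^t\!\!\int_{\mathbb{R}^d}y\,\tilde M(ds\,dy).
\end{equation*}
The uniqueness of the canonical decomposition of a special semimartingale then gives the local-martingale characterization
\begin{equation*}
  \beta_s+\int_{\|y\|>1}y\,m(s,dy)=0\qquad \mathbb{P}\otimes ds\text{-a.s.}
\end{equation*}

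Next I would project this identity using the defining formulas \eqref{new_param}. Applying $\mathbb{E}[\,\cdot\mid\xi_{s-}=z\,]$ to both sides and using Fubini to commute the conditional expectation with the integral against $m$, I obtain
\begin{equation*}
  b(s,z)+\int_{\|y\|>1}y\,n(s,dy,z)=0\qquad \text{for }p_s\text{-a.e. }z,
\end{equation*}
where $p_s$ is the common marginal law of $\xi_s$ and $X_s$ guaranteed by Theorem \ref{th1}. Performing the analogous rewriting on the SDE \eqref{y.sde} for $X$ puts $X$ in the form
\begin{equation*}
  X_t=\xi_0+\int_0^t\!\!\Bigl(b(u,X_{u-})+\!\!\int_{\|y\|>1}\!\!\!y\,n(u,dy,X_{u-})\Bigr)du+\int_0^t\!\!\Sigma(u,X_u)\,dB_u+\int_0^t\!\!\int_{\mathbb{R}^d}y\,\tilde N(du\,dy),
\end{equation*}
and the previous identity makes the finite-variation part vanish $\mathbb{Q}_{\xi_0}$-a.s. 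Hence $X$ is a local martingale, which proves (1). A localising sequence can be built in the usual way via stopping times that truncate the two stochastic integrals.

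For (2), once the drift has been removed, $X$ is the sum of the Brownian integral and the purely discontinuous compensated integral $\int_0^t\!\int y\,\tilde N$. The Brownian part is square-integrable since ${}^{t}\Sigma\Sigma=a$ is bounded by Assumption \ref{A1}. For the jump part, the isometry for compensated Poisson integrals together with the tower property and $p_s\overset{d}{=}\mathcal L(\xi_s)$ yields
\begin{equation*}
  \mathbb{E}^{\mathbb{Q}_{\xi_0}}\!\!\int_0^T\!\!\int\|y\|^2n(s,dy,X_{s-})\,ds=\int_0^T\!\!\int p_s(dz)\!\!\int\|y\|^2n(s,dy,z)\,ds=\mathbb{E}^{\mathbb{P}}\!\!\int\|y\|^2\mu(ds\,dy)<\infty,
\end{equation*}
so the compensated jump integral is a true square-integrable martingale. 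The main subtlety, and the step I expect to require the most care, is justifying the passage from the a.s.\ identity for $(\beta,m)$ under $\mathbb{P}$ to the corresponding identity evaluated along the path of $X$ under $\mathbb{Q}_{\xi_0}$; this is legitimate precisely because $b$ and $n$ are defined as conditional expectations with respect to $\xi_{t-}$ and $X_{t-}\overset{d}{=}\xi_{t-}$ by Theorem \ref{th1}, so the set on which the identity fails has $p_t$-measure zero and is therefore never visited by $X_{t-}$.
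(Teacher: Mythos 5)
Your proposal is correct and follows essentially the same route as the paper: use the uniqueness of the canonical decomposition of a special semimartingale to derive the $\mathbb{P}\otimes ds$-a.e.\ identity $\beta_s+\int_{\|y\|>1}y\,m(s,dy)=0$, push it through the conditional expectations defining $(b,n)$, and conclude that the finite-variation part of the SDE \eqref{y.sde} vanishes $\mathbb{Q}_{\xi_0}$-a.s.\ because $X_{s-}\overset{d}{=}\xi_{s-}$. Your second part matches the paper's as well, with the useful extra remark that the Brownian contribution is automatically square-integrable since $a$ is bounded under Assumption \ref{A1}.
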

\begin{proof}
1) If  $\xi$ is a local martingale then the uniqueness of its semimartingale decomposition
entails that
$$
\beta_t+\int_{\|y\|\geq 1} y \,m(t,dy)=0\,\qquad dt \times \mathbb{P}-a.e.
$$
$${\rm hence}\qquad
\mathbb{Q}_{\xi_0}\left(\forall t\in[0,T],\quad\int_0^t ds \,\left[b(s,X_{s-})+\int_{\|y\|\geq 1} y\, n(s,dy,X_{s-}) \right] = 0\right)\ =1.
$$
The assumptions on $m,\delta$ then entail that $X$, as a sum of
an Ito integral and a compensated Poisson integral, is a local
 martingale.\\[0.1cm]
2) If $\mathbb{E}^{\mathbb{P}}\left[\int \|y\|^2 \mu(dt\,dy)\right]<\infty$ then
$$
\mathbb{E}^{\mathbb{Q}_{\xi_0}}\left[\int \|y\|^2 n(t,dy,X_{t-})\right]<\infty,
$$
and the compensated Poisson integral in $X$ is a square-integrable martingale.
\end{proof}
\section[The case of semimartingales driven by  Poisson random measure]{Mimicking a semimartingale driven by a Poisson random measure}\label{extension.sec}
The representation \eqref{classeJ} is not the most commonly used in
applications, where a process is constructed as the solution to a
stochastic differential equation driven by a Brownian motion and a
Poisson random measure
\begin{equation}\label{classeK} %  \begin{split}
 \zeta_t=\zeta_0+\int_0^t \beta_s\,ds+\int_0^t \delta_s\,dW_s+\int_0^t\int\psi_s(y)\,\tilde{N}(ds\:dy), %  \int_0^t\int_{|y|> 1}\psi_s(y)\,{N}_Z(ds\:dy); \end{split}
\end{equation}
where $\xi_0\in\mathbb{R}^d$, $W$ is a standard
$\mathbb{R}^n$-valued Wiener process, $\beta$ and $\delta$ are non-anticipative c\`{a}dl\`{a}g
processes, $N$ is a Poisson random measure on
$[0,T]\times\mathbb{R}^d$ with intensity   $\nu(dy)\,dt$ where
\begin{equation}
\int_{\mathbb{R}^d} \left(1\wedge \|y\|^2\right) \nu(dy)<\infty,\qquad \tilde{N}=N-\nu(dy) dt,
\end{equation}
and the random jump amplitude $\psi:[0,T]\times\Omega\times
\mathbb{R}^d\mapsto \mathbb{R}^d$ is $\mathcal{P}\otimes {\cal
B}(\mathbb{R}^d)$-measurable, where
  $\mathcal{P}$ is the predictable $\sigma$-algebra on $[0,T]\times\Omega$.
In this section, we shall assume that
\begin{equation*}%\label{H3}
\forall t\in[0,T],\quad\psi_t(\omega,0)=0\quad{\rm and}\quad\mathbb{E}\left[\int_0^t \int_{\mathbb{R}^d} \left(1\wedge \|\psi_s(.,y)\|^2\right)\,\nu(dy)\,ds\right]<\infty.
\end{equation*}
The difference between this representation and \eqref{classeJ}
is the presence of a random jump amplitude $\psi_t(\omega,.)$ in \eqref{classeK}.
The relation between these two representations for semimartingales has been discussed in great generality in \cite{karoui77,kabanov81}. Here we give a
less general result  which suffices for our purpose.
The following result expresses $\zeta$ in the form \eqref{classeJ} suitable
for applying Theorem \ref{th1}.
\begin{lemma}[Absorbing the jump amplitude in the compensator]\label{lem}
$$  \zeta_t=\zeta_0+\int_0^t \beta_s\,ds+\int_0^t \delta_s\,dW_s+\int_0^t\int\ \psi_s(z)\,\tilde{N}(ds\:dz)$$
%where $N$ is a Poisson random measure with intensity $\nu(dz)\,dt$,
can be also represented as
\begin{equation}\label{Z.eq}
%  \begin{split}
  \zeta_t=\zeta_0+\int_0^t \beta_s\,ds+\int_0^t \delta_s\,dW_s+\int_0^t\int y\,\tilde{M}(ds\:dy),
 % +\int_0^t\int_{|y|> 1} y\,{M}(ds\:dy); \end{split}
\end{equation}
   where $M$ is an integer-valued random measure on  $[0,T]\times\mathbb{R}^d$
   with compensator $\mu(\omega,dt,dy) $ given by
 \begin{equation*}
 \forall A\in {\cal B}(\mathbb{R}^d-\{0\}),\qquad \mu(\omega,dt,A)=\nu( \psi^{-1}_t(\omega, A))\,dt,
 \end{equation*}
where $\psi^{-1}_t(\omega, A)=\{z\in \mathbb{R}^d, \psi_t(\omega, z)\in A \}$ denotes the inverse image of $A$ under the partial map $\psi_t$.
 \end{lemma}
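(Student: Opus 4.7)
The plan is to construct $M$ as the image of the Poisson random measure $N$ under the (random) map $(s,z)\mapsto(s,\psi_s(\omega,z))$, and then identify its compensator via the standard change-of-variables formula for integer-valued random measures. Concretely, for $A\in\mathcal{B}(\mathbb{R}^d-\{0\})$ and $t\in[0,T]$, I would set
\[
M(\omega,[0,t]\times A)=\int_0^t\int_{\mathbb{R}^d}1_A(\psi_s(\omega,z))\,N(\omega,ds\,dz).
\]
Since $\psi_s(\omega,0)=0\notin A$, the integrand is well-defined; and because $N$ charges each $(s,z)$ at most once, the same is true of $M$, so $M$ is an integer-valued random measure on $[0,T]\times\mathbb{R}^d$.

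Next, I would compute the compensator of $M$ by testing against arbitrary predictable integrands. For any nonnegative $\mathcal{P}\otimes\mathcal{B}(\mathbb{R}^d)$-measurable function $f(s,\omega,y)$, the definition of $M$ immediately gives
\[
\int_0^t\int f(s,\omega,y)\,M(ds\,dy)=\int_0^t\int f(s,\omega,\psi_s(\omega,z))\,N(ds\,dz).
\]
The composite $(s,\omega,z)\mapsto f(s,\omega,\psi_s(\omega,z))$ is still $\mathcal{P}\otimes\mathcal{B}(\mathbb{R}^d)$-measurable, and the compensator of $N$ is $\nu(dz)\,ds$; hence the difference between the right-hand side and $\int_0^t\int f(s,\omega,\psi_s(\omega,z))\,\nu(dz)\,ds$ is a local martingale. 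Reversing the change of variables identifies $\mu(\omega,dt,dy)$ as the unique predictable measure satisfying
\[
\int_0^t\int f(s,\omega,y)\,\mu(ds\,dy)=\int_0^t\int f(s,\omega,\psi_s(\omega,z))\,\nu(dz)\,ds,
\]
and specializing to $f=1_A$ yields exactly $\mu(\omega,dt,A)=\nu(\psi_t^{-1}(\omega,A))\,dt$.

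Finally, I would verify the pathwise identity of the jump terms in \eqref{classeK} and \eqref{Z.eq}. Applying the change of variables above with $f(s,\omega,y)=y\,1_{\{\|y\|\geq\varepsilon\}}$ gives $\int_0^t\int_{\|y\|\geq\varepsilon}y\,M=\int_0^t\int_{\|\psi_s(z)\|\geq\varepsilon}\psi_s(z)\,N$ and the analogous identity for the compensators, so on the truncated part the equality $\int y\,\tilde M=\int\psi_s(z)\,\tilde N$ is immediate. Since $\psi_s(z)$ need not be $\nu$-integrable near $0$, the identity on the small jumps has to be obtained as equality of compensated integrals: using the $L^2$-isometry for $\tilde N$ together with the hypothesis $\mathbb{E}\bigl[\int_0^t\!\int(1\wedge\|\psi_s(z)\|^2)\,\nu(dz)\,ds\bigr]<\infty$ and the corresponding bound for $\tilde M$ deduced from the formula for $\mu$, one passes to the limit $\varepsilon\to 0$ in $L^2$. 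This truncation/passage-to-the-limit argument is the only real technical step; the rest amounts to unwinding the change-of-variables formula.
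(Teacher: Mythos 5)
Your proof is correct and follows essentially the same route as the paper's: you define $M$ as the pushforward of $N$ under $(s,z)\mapsto(s,\psi_s(\omega,z))$, test against predictable integrands to carry out the change of variables, and then invoke uniqueness of the predictable compensator; the paper phrases the compensator characterization via equality of expectations rather than via the local-martingale property, but these are the same criterion. The only real difference is that you close the argument with an explicit truncation/$L^2$-limit step to match the compensated small-jump integrals of $\zeta$ in the two representations, whereas the paper leaves this verification implicit once the compensator is identified; including it is a harmless (and slightly more careful) elaboration, not a different method.
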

 \begin{proof}
The result can be deduced from \cite[Th\'eor\`eme 12]{karoui77}  but we sketch here the proof for completeness.
 A Poisson random measure $N$ on $[0,T]\times\mathbb{R}^d$ can be represented as a counting measure for some random sequence $(T_n,U_n)$ with values in $[0,T]\times\mathbb{R}^d$
    \begin{equation}\label{defN}
      N=\sum_{n\geq 1}1_{\{T_n,U_n\}}.
    \end{equation}
 Let $M$ be the integer-valued random measure defined by:
    \begin{equation}\label{defM}
      M=\sum_{n\geq 1}1_{\{T_n,\psi_{T_n}(.,U_n)\}}.
    \end{equation}
 $\mu$,  the {\it predictable} compensator of $M$
     is characterized by the following property \cite[Thm 1.8.]{jacodshiryaev}: for
any  positive $\mathcal{P}\otimes {\cal B}(\mathbb{R}^d)$-measurable map $\chi:[0,T]\times\Omega\times\mathbb{R}^d \to\mathbb{R}^+$ and any $ A\in{\cal B}(\mathbb{R}^d-\{0\})$,
    \begin{equation}\label{compM}
      \mathbb{E}\left[\int_0^t\int_{A} \chi(s,y)\,M(ds\,dy)\right]=\mathbb{E}\left[\int_0^t\int_{A} \chi(s,y)\,\mu(ds\,dy)\right].
    \end{equation}
    Similarly, for $B \in {\cal B}(\mathbb{R}^d-\{0\})$
    \begin{equation*}
      \mathbb{E}\left[\int_0^t\int_{B} \chi(s,y)\,N(ds\,dy)\right]=\mathbb{E}\left[\int_0^t\int_{B} \chi(s,y)\,\nu(dy)\,ds\right].
    \end{equation*}
 Using formulae (\ref{defN}) and (\ref{defM}):
    \begin{eqnarray*}
      \mathbb{E}\left[\int_0^t\int_{A} \chi(s,y)\,M(ds\,dy)\right]&=&
      \mathbb{E}\left[\sum_{n\geq 1}\chi(T_n,\psi_{T_n}(., U_n))\right]\\[0.1cm]
      &=&\mathbb{E}\left[\int_0^t\int_{\psi^{-1}_s(., A)} \chi(s,\psi_s(.,z))\,N(ds\,dz)\right]\\[0.1cm]
      &=&\mathbb{E}\left[\int_0^t\int_{\psi^{-1}_s(., A)} \chi(s,\psi_s(.,z))\,\nu(dz)\,ds\right]
    \end{eqnarray*}
    Formula (\ref{compM}) and the above equalities lead to:
\begin{equation*}
\mathbb{E}\left[\int_0^t\int_{A} \chi(s,y)\,\mu(ds\,dy)\right]=\mathbb{E}\left[\int_0^t\int_{\psi^{-1}_s(., A)} \chi(s,\psi_s(.,z))\,\nu(dz)\,ds\right].
\end{equation*}
      Since $\psi$ is a predictable random function,  the uniqueness of the predictable compensator $\mu$ (take $\phi\equiv
      Id$ in
    \cite[Thm 1.8.]{jacodshiryaev}) entails
      \begin{equation}\label{chgtvariable}
        \mu(\omega,dt,A)=\nu( \psi^{-1}_t(\omega, A)) \,dt.
      \end{equation}
   Formula (\ref{chgtvariable}) defines a random measure $\mu$ which is a L\'{e}vy kernel %(see (\ref{H3}))
       \begin{equation*}
       \int_0^t \int \left(1\wedge \|y\|^2\right)\,\mu(dy\,ds)=\int_0^t \int \left(1\wedge \|\psi_s(.,y)\|^2\right)\,\nu(dy)\,ds<\infty.
       \end{equation*}
%  To end the proof, we observe that%, due to (\ref{H3}):
%     \begin{eqnarray*}
%       \int_0^t\int y\,\tilde{M}(ds\:dy)&=& \int_0^t\int_{\psi_t^{-1}(\mathbb{R}^d)} y\,(M-\mu)(ds\:dy)\\
%       &=&\int_0^t\int\ \psi_s(z)\,(N(ds\:dz)-\nu(dz))\,ds\\
%       &=&\int_0^t\int\ \psi_s(z)\,\tilde{N}(ds\:dz).
%     \end{eqnarray*}
 \end{proof}
In the case where  $\psi_t(\omega, .):  \mathbb{R}^d\mapsto \mathbb{R}^d $
is invertible and differentiable, we can characterize the density of the compensator $\mu$ as follows:
\begin{lemma}[Differentiable case]
If the L\'evy measure $\nu(dz)$ has a density $\nu(z)$ and if $\psi_t(\omega, .):  \mathbb{R}^d\mapsto \mathbb{R}^d $ is a $\mathcal{C}^1(\mathbb{R}^d,\mathbb{R}^d)$-diffeomorphism,
%, with inverse
%$$\phi:[0,T]\times\Omega\times \mathbb{R}^d\mapsto \mathbb{R}^d$$  such that
%\begin{itemize}
%  \item  $\phi$ is predictable i.e. $\mathcal{P}\otimes {\cal B}(\mathbb{R}^d)$-measurable,
%  \item $\phi_t(\mathbb{R}^d)=\mathbb{R}^d$,
%\item  $\phi_t(\omega, .) \in C^1(\mathbb{R}^d, \mathbb{R}^d)$,
%\item  $\forall t\in [0,T],\forall z\in\mathbb{R}^d,\qquad \phi_t(\omega, \psi_t(\omega, z) )= z $,
%\end{itemize}
then  $\zeta$, given in \eqref{classeK}, has the representation
\begin{equation*}
  \zeta_t=\zeta_0+\int_0^t \beta_s\,ds+\int_0^t \delta_s\,dW_s+\int_0^t\int y\,\tilde{M}(ds\:dy),
\end{equation*}
   where $M$ is an integer-valued random measure
   with compensator
$$ m(\omega;t,y)\,dt\,dy = 1_{\psi_t(\omega,\mathbb{R}^d)}(y)\left|{\rm det}\nabla_y \psi_t\right|^{-1}(\omega,\psi_{t}^{-1}(\omega,y))\,\nu(\psi_t^{-1}(\omega,y))\,dt\,dy, $$
where $\nabla_y\psi_t$ denotes the Jacobian matrix of  $\psi_t(\omega, .)$. \label{lemma2}
\end{lemma}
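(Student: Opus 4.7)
The plan is to reduce this statement to Lemma \ref{lem} combined with a pathwise change of variables formula for the diffeomorphism $\psi_t(\omega,\cdot)$. By Lemma \ref{lem}, $\zeta$ admits the representation \eqref{Z.eq} with an integer-valued random measure $M$ whose predictable compensator is $\mu(\omega,dt,A)=\nu(\psi_t^{-1}(\omega,A))\,dt$ for every $A\in{\cal B}(\mathbb{R}^d-\{0\})$. Since by assumption $\nu(dz)=\nu(z)\,dz$ has a density with respect to Lebesgue measure, for each fixed $(\omega,t)$ we have
\begin{equation*}
\nu(\psi_t^{-1}(\omega,A))=\int_{\psi_t^{-1}(\omega,A)} \nu(z)\,dz.
\end{equation*}

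The second step is to apply the standard change of variables $y=\psi_t(\omega,z)$ to this integral. Because $\psi_t(\omega,\cdot)\in\mathcal{C}^1(\mathbb{R}^d,\mathbb{R}^d)$ is a diffeomorphism, its Jacobian matrix $\nabla_z\psi_t(\omega,z)$ is invertible at every $z$, and its image $\psi_t(\omega,\mathbb{R}^d)$ is an open set containing the image of the support of $\nu$ under $\psi_t$. Setting $z=\psi_t^{-1}(\omega,y)$, the inverse function theorem gives $dz=\bigl|\det\nabla_z\psi_t(\omega,\psi_t^{-1}(\omega,y))\bigr|^{-1}\,dy$, and the region $\psi_t^{-1}(\omega,A)$ is mapped onto $A\cap\psi_t(\omega,\mathbb{R}^d)$. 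This yields
\begin{equation*}
\nu(\psi_t^{-1}(\omega,A)) = \int_A \mathbf{1}_{\psi_t(\omega,\mathbb{R}^d)}(y)\,\bigl|\det\nabla_y\psi_t\bigr|^{-1}(\omega,\psi_t^{-1}(\omega,y))\,\nu(\psi_t^{-1}(\omega,y))\,dy,
\end{equation*}
which directly identifies the density $m(\omega;t,y)$ claimed in the statement.

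The last thing to check is that this expression is well-defined as the density of a \emph{predictable} compensator, i.e. that $(t,\omega,y)\mapsto m(\omega;t,y)$ is $\mathcal{P}\otimes{\cal B}(\mathbb{R}^d)$-measurable. This follows from the predictability of $\psi$ and the fact that the $\mathcal{C}^1$-diffeomorphism property is preserved under inversion: the map $(t,\omega,y)\mapsto\psi_t^{-1}(\omega,y)$ is predictable in $(t,\omega)$ and continuous in $y$, as is $(t,\omega,z)\mapsto\nabla_z\psi_t(\omega,z)$. Composition and the continuity of $\det$ and $(\cdot)^{-1}$ on $GL_d(\mathbb{R})$ therefore give the required joint measurability, and uniqueness of the predictable compensator (already invoked in the proof of Lemma \ref{lem}) allows us to conclude.

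I do not expect any real obstacle: the content is a routine pathwise change of variables. The only mildly delicate point is being careful that $\psi_t(\omega,\cdot)$ is a diffeomorphism only onto its image (hence the indicator $\mathbf{1}_{\psi_t(\omega,\mathbb{R}^d)}$), and that the Jacobian is to be evaluated at $\psi_t^{-1}(\omega,y)$ rather than at $y$.
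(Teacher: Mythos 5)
Your proof is correct and takes essentially the same approach as the paper: both obtain the density by applying the change of variables $y=\psi_t(\omega,z)$ under the diffeomorphism to the compensator established in Lemma \ref{lem}, taking care of the indicator $\mathbf{1}_{\psi_t(\omega,\mathbb{R}^d)}$ and the point at which the Jacobian is evaluated, then conclude via predictability of $\psi$ and uniqueness of the predictable compensator. The only cosmetic difference is that the paper carries out the change of variables inside the expectation identity used to characterize the compensator, whereas you apply it pathwise to $\mu(\omega,dt,A)=\nu(\psi_t^{-1}(\omega,A))\,dt$.
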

\begin{proof}
We recall from the proof of Lemma \ref{lem}:
\begin{equation*}
\mathbb{E}\left[\int_0^t\int_{A} \chi(s,y)\,\mu(ds\,dy)\right]=\mathbb{E}\left[\int_0^t\int_{\psi^{-1}_s(., A)} \chi(s,\psi_s(.,z))\,\nu(z)\,ds\,dz\right].
\end{equation*}
Proceeding to the change of variable $\psi_s(.,z)=y$:
\begin{eqnarray*}
&&\mathbb{E}\left[\int_0^t\int_{\psi^{-1}_s(., A)} \chi(s,\psi_s(.,z))\,\nu(z)\,ds\,dz\right]\\[0.1cm]
&=&\mathbb{E}\left[\int_0^t\int_{A}1_{\{\psi_s(\mathbb{R}^d)\}}(y)\, \chi(s,y)\,\left|{\rm det}\nabla \psi_s\right|^{-1}(.,\psi_s^{-1}(.,y))\, \nu(\psi_s^{-1}(.,y)) ds\,dy\right].
\end{eqnarray*}
The density appearing in the right hand side is predictable  since $\psi$ is a predictable random function. The uniqueness of the predictable compensator $\mu$ yields the result.
\end{proof}
Let us combine Lemma \ref{lemma2} and Theorem \ref{th1}. To proceed, we make a further assumption.
\begin{assumption}\label{H3b}
The L\'evy measure  $\nu$ admits a density
  $\nu(y)$ with respect to the Lebesgue measure on $\mathbb{R}^d$ and:
\begin{eqnarray*}
&\quad&\forall t\in[0,T] \:\exists K_2>0\quad \int_0^t\int_{\|y\|>1} \left(1\wedge \|\psi_s(.,y)\|^2\right)\,\nu(y)\,dy\,ds<K_2 \:\:\mathrm{a.s.}\\[0.1cm]
&\quad\quad\mathrm{and}&\nonumber\\[0.1cm]
&\quad&\lim_{R\to\infty} \int_0^T \nu\left(\psi_t\left(\{\|y\|\geq R\}\right)\right)\,dt=0 \quad\mathrm{a.s.}
\end{eqnarray*}
\end{assumption}
\begin{theorem}\label{th2}
Let $(\zeta_t)$ be an Ito semimartingale defined on $[0,T]$ by the given the decomposition
\begin{equation*} %  \begin{split}
  \zeta_t=\zeta_0+\int_0^t \beta_s\,ds+\int_0^t \delta_s\,dW_s+\int_0^t\int\psi_s(y)\,\tilde{N}(ds\:dy), %  \int_0^t\int_{|y|> 1}\psi_s(y)\,{N}_Z(ds\:dy); \end{split}
\end{equation*}
where $\psi_t(\omega, .):  \mathbb{R}^d\mapsto \mathbb{R}^d $
is invertible and differentiable with inverse $\phi_t(\omega,.)$.
Define
\begin{equation}\label{mesure.image.de.phi}
m(t,y)= 1_{\{y\in\psi_t(\mathbb{R}^d)\}}\left|{\rm det}\nabla \psi_t\right|^{-1}(\psi_t^{-1}(y))\,\nu(\psi_t^{-1}(y)).
\end{equation}
Assume there exists measurable functions $a:[0,T]\times \mathbb{R}^d\mapsto M_{d\times d}(\mathbb{R}), b:[0,T]\times \mathbb{R}^d\mapsto \mathbb{R}^d$ and $j:(t,x)\in [0,T]\times \mathbb{R}^d)\to \mathcal{R}(\mathbb{R}^d-\{0\})$ satisfying Assumption \ref{A2} such that for $(t,z)\in[0,T]\times\mathbb{R}^d,B\in\mathcal{B}(\mathbb{R}^d-\{0\}),$
  \begin{equation}\label{new_param}
    \begin{split}
      \mathbb{E}\left[\beta_t|\zeta_{t^-}\right]&=b(t,\zeta_{t-})\:\mathrm{a.s},\\[0.1cm]
      \mathbb{E}\left[{}^t\delta_t\delta_t|\zeta_{t^-}\right]&=a(t,\zeta_{t-})\:\mathrm{a.s},\\[0.1cm]
      \mathbb{E}\left[m(.,t,B)|\zeta_{t^-}\right]&=j(t,B,\zeta_{t-})\:\mathrm{a.s}.
    \end{split}
  \end{equation}
If $\beta$ and $\delta$ satisfy Assumption \ref{H1}, $\nu$ Assumption \ref{H3b}, $(\delta,m )$ satisfy Assumptions \ref{H2}-\ref{H3}, then  the stochastic
differential equation
\begin{equation}
  X_t=\zeta_0+\int_0^t b(u,X_u)\,du+\int_0^t \Sigma(u,X_u)\,dB_u+\int_0^t\int y \,\tilde{J}(du\ dy),
  \end{equation}
where  $(B_t)$ is an $n$-dimensional Brownian motion, $J$ is an integer valued random measure on $[0,T]\times\mathbb{R}^d$ with compensator
  $j(t,dy,X_{t-})\,dt$, $\tilde{J}=J-j$  and $\Sigma:[0,T]\times\mathbb{R}^d\mapsto M_{d\times n}(\mathbb{R})$
  is a continuous function such that
$ {}^t\Sigma(t,z)\Sigma(t,z) = a(t,z)$, admits a unique weak solution  $((X_t)_{t\in[0,T]},\mathbb{Q}_{\zeta_0})$ whose marginal distributions mimick those of $\zeta$:
  $$\forall t\in[0,T]\quad  X_t\overset{\underset{\mathrm{d}}{}}{=}\zeta_t. $$
Under $\mathbb{Q}_{\zeta_0}$, $X$ is a  Markov process with infinitesimal
generator $\mathcal{L}$ given by \eqref{non.deg.op}.
\end{theorem}
\begin{proof}
We first use Lemma \ref{lemma2} to obtain the representation (\ref{Z.eq}) of $\zeta$:
\begin{equation*}
  \zeta_t=\zeta_0+\int_0^t \beta_s\,ds+\int_0^t \delta_s\,dW_s+\int_0^t\int y\,\tilde{M}(ds\:dy)
\end{equation*}
Then, we observe that
\begin{eqnarray*}
\int_0^t\int y\,\tilde{M}(ds\:dy)&=&\int_0^t\int_{\|y\|\leq 1} y\,\tilde{M}(ds\:dy)+\int_0^t\int_{\|y\|>1} y\,[M(ds\:dy)-\mu(ds\,dy)]\\[0.1cm]
&=&\int_0^t\int_{\|y\|\leq 1} y\,\tilde{M}(ds\:dy)+\int_0^t\int_{\|y\|>1} y\,M(ds\:dy)-\int_0^t\int_{\|y\|>1} y\,\mu(ds\,dy),
\end{eqnarray*}
where the terms above are well-defined thanks to Assumption \ref{H3b}.
Lemma \ref{lemma2} leads to:
\begin{equation*}
\int_0^t\int_{\|y\|>1} y\,\mu(ds\,dy)=\int_0^t\int_{\|\psi_s(y)\|>1} \|\psi_s(.,y)\|^2\,\nu(y)\,dy\,ds.
\end{equation*}
Hence:
\begin{equation*}
  \begin{split}
  \zeta_t&=\zeta_0+\left[\int_0^t \beta_s\,ds-\int_0^t\int_{\|\psi_s(y)\|>1} \|\psi_s(.,y)\|^2\,\nu(y)\,dy\,ds\right]+ \int_0^t \delta_s\,dW_s\\[0.1cm]
  &+\int_0^t\int_{\|y\|\leq 1} y\,\tilde{M}(ds\:dy)+\int_0^t\int_{\|y\|>1} y\,M(ds\:dy).
  \end{split}
\end{equation*}
This representation has the form \eqref{classeJ} and Assumptions \ref{H1} and \ref{H3b} guarantee that the  local characteristics  of $\zeta$ satisfy the assumptions of Theorem \ref{th1}. Applying Theorem \ref{th1} yields the result.
%Define the integro-differential operator $L$ given, for all $t\in [0,\infty[$ and for all $f\in\mathcal{C}_0^\infty(\mathbb{R}^d)$,
%C^\infty_0(\mathbb{R}^d)$, by
 % \begin{equation}\label{L1.eq}
 %  \begin{split}
 %     L_tf(x)&=\beta^Y(t,x).\nabla f(x)+\sum_{i,j=1}^d\frac{a_{i,j}^Y(t,x)}{2}\frac{\partial^2 f}{\partial x_i\partial x_j} (x)\\
 %     &+\int_{\mathbb{R}^d}[f(x+y)-f(x)-y.\nabla f(x)]m_Y(t,dy,x)
 %  \end{split}
 % \end{equation}
\end{proof}

\section{Examples}\label{examples.sec}
We now give some examples of stochastic models used in applications, where  Markovian projections
can be characterized in a more explicit manner than in the general results above. These examples also serve to illustrate that the continuity assumption (Assumption \ref{A2}) on the projected coefficients $(b,a,n)$ in \eqref{new_param} can be verified in many useful settings.
\subsection{Semimartingales  driven by a Markov process}\label{markov.sec}
In many examples in stochastic modeling, a quantity $Z$ is expressed as a smooth function
$f: \mathbb{R}^d\to\mathbb{R}$ of  a d-dimensional Markov process $Z$:
$$\xi_t=f(Z_t)\qquad {\rm with}\quad f: \mathbb{R}^d\to\mathbb{R}$$
We will show that in this situation our assumptions will hold for
$\xi$ as soon as $Z$ has an infinitesimal generator whose
coefficients satisfy Assumptions \ref{A1}, \ref{A2} and \ref{C},
allowing us to construct the Markovian Projection of $\xi_t$.

%The aim of this section is to build in an explicit manner the Markovian Projection of $\xi_t$ for a sufficiently large class of functions $f$.\\
Consider a time-dependent integro-differential operator
$L=(L_t)_{t\in[0,T]}$ defined, for  $g\in\mathcal{C}_0^\infty(\mathbb{R}^d)$, by
  \begin{equation}\label{LZ.eq}
   \begin{split}
      L_tg(z)&=b_Z(t,z).\nabla g(z)+\sum_{i,j=1}^d\frac{(a_Z)_{ij}(t,x)}{2}\frac{\partial^2 g}{\partial x_i\partial x_j} (x)\\[0.1cm]
      &+\int_{\mathbb{R}^d}[g(z+\psi_Z(t,z,y)-g(z)-\psi_Z(t,y,z).\nabla g(z)]\nu_Z(y) dy,
   \end{split}
  \end{equation}
where  $b_Z:[0,T]\times \mathbb{R}^d\mapsto \mathbb{R}^d$, $a_Z:[0,T]\times \mathbb{R}^d\mapsto M_{d\times d}(\mathbb{R})$, and $\psi_Z:[0,T]\times \mathbb{R}^d\times \mathbb{R}^d$ are measurable functions and $\nu_Z$ is a L\'{e}vy density.\\[0.1cm]
If one assume that
\begin{equation}\label{chap.mim.psi}
\begin{split}
&\psi_Z(.,.,0)=0\quad \psi_Z(t,z,.)\,\mathrm{is\,a\,} \mathcal{C}^1(\mathbb{R}^d,\mathbb{R}^d)-{\rm diffeomorphism},\\[0.1cm]
& \forall t\in[0,T]\:\forall z\in\mathbb{R}^d\:\mathbb{E}\left[\int_0^t \int_{\{\|y\|\geq 1\}} \left(1\wedge \|\psi_Z(s,z,y)\|^2\right)\,\nu_Z(y)\,dy\,ds \right]<\infty,
\end{split}
\end{equation}
then applying Lemma \ref{lemma2}, \eqref{LZ.eq} rewrites, for  $g\in\mathcal{C}_0^\infty(\mathbb{R}^d)$:
  \begin{equation}\label{LZ.eq.bis}
   \begin{split}
      L_tg(x)&=b_Z(t,x).\nabla g(x)+\sum_{i,j=1}^d\frac{(a_Z)_{ij}(t,x)}{2}\frac{\partial^2 g}{\partial x_i\partial x_j} (x)\\[0.1cm]
      &+\int_{\mathbb{R}^d}[g(x+y)-g(x)-\,y.\nabla g(x)]m_Z(t,y,x)\,dy,
   \end{split}
  \end{equation}
where
\begin{equation}\label{mvspsi}
m_Z(t,y,x)= 1_{\{y\in\psi_Z(t,\mathbb{R}^d,x)\}}\left|{\rm det}\nabla \psi_Z\right|^{-1}(t,x,\psi_Z^{-1}(t,x,y))\,\nu_Z(\psi_Z^{-1}(t,x,y)).
\end{equation}
Throughout this section we shall assume that $(b_Z,a_Z,m_Z)$ satisfy Assumptions \ref{A1}, \ref{A2} and \ref{C}. Proposition \ref{th.well.posed} then implies that for any $Z_0\in\mathbb{R}^d$, the SDE,
\begin{equation}
  \begin{split}
 \forall t\in[0,T] \quad  Z_t&=Z_0+\int_0^t b_Z(u,Z_{u-})\,du+\int_0^t a_Z(u,Z_{u-})\,dW_u\\[0.1cm]
    &+\int_0^t\int \psi_Z(u,Z_{u-},y)\,\tilde{N}(du\ dy),
  \end{split}
  \end{equation}
admits a weak solution  $((Z_t)_{t\in[0,T]},\mathbb{Q}_{Z_0})$, unique in law, with $(W_t)$ an n-dimensional Brownian motion, $N$ a Poisson
  random measure on $[0,T]\times\mathbb{R}^d$ with compensator
  $\nu_Z(y)\,dy\,dt$, $\tilde{N}$ the associated
compensated random measure. Under $\mathbb{Q}_{Z_0}$, $Z$ is a   Markov process with infinitesimal generator $L$.

Consider now the process
\begin{equation}
  \xi_t=f(Z_t).
\end{equation}
The aim of this section is to build in an explicit manner the Markovian Projection of $\xi_t$ for a sufficiently large class of functions $f$.

Let us first rewrite $\xi_t$ in the form (\ref{classeJ}):% in the following Proposition:
%the main result of this section:
\begin{proposition}\label{semi.decomp.f.de.markov}
Let $f\in {\mathcal C}^2(\mathbb{R}^d,\mathbb{R})$ with bounded derivatives such that,
\begin{equation*}
\forall (z_1,\cdots,z_{d-1})\in \mathbb{R}^{d-1},\quad u\mapsto f(z_1,\ldots,z_{d-1}, u)\:\,\mathrm{is\,a\,} \mathcal{C}^1(\mathbb{R},\mathbb{R})-{\rm diffeomorphism}.
\end{equation*}
%\begin{equation*}
%f(\mathbb{R}^d)=\mathbb{R}.
%\end{equation*}
 %Assume that $a_Z$ satisfies assumption either $(i)$ or $(ii)$, then $\xi_t=f(Z_t)$ admits the following semimartingaledecomposition:
Assume that $(a_Z,m_Z)$ satisfy Assumption \ref{C}, then $\xi_t=f(Z_t)$ admits the following semimartingale decomposition:
\begin{equation*}
\xi_t=\xi_0+\int_0^t \beta_s\,ds+ \int_0^t  \delta_s \,dB_s +\int_0^t \int u \,\tilde{K}(ds\:du),
\end{equation*}
where
\begin{equation}
  \begin{cases}
    \beta_t&=\nabla f(Z_{t^-}).b_Z(t,Z_{t-})+\frac{1}{2}{\rm tr}\left[\nabla^2 f(Z_{t-}){}^ta_Z(t,Z_{t-})a_Z(t,Z_{t-})\right]\\[0.15cm]
    &+\int_{\mathbb{R}^d}\left(f(Z_{t^-}+\psi_Z(t,Z_{t-},y))-f(Z_{t^-})-\psi_Z(t,Z_{t-},y).\nabla f(Z_{t^-})\right)\,\nu_Z(y)\,dy,\\[0.15cm]
    \delta_t&=\|\nabla f(Z_{t-})a_Z(t,Z_{t-})\|,\\[0.15cm]
  \end{cases}
\end{equation}
$B$ is a real valued Brownian motion and $K$ is an integer-valued random measure on $[0,Tj]\times\mathbb{R}$ with compensator $k(t,Z_{t-},u)\,du\,dt$ defined for all $z\in\mathbb{R}^d$ and for any $u>0$ (and analogously for $u<0$) via:
\begin{equation}
k(t,z,[u,\infty[)=\int_{\mathbb{R}^d} 1_{\{f(z+\psi_Z(t,z,y))-f(z)\geq u\}}\,\nu_Z(y)\,dy.
\end{equation}
and $\tilde{K}$ its compensated random measure.
\end{proposition}
\begin{proof}
Applying It\^{o}'s formula to $\xi_t=f(Z_t)$ yields
\begin{eqnarray*}
  \xi_t&=&\xi_0+\int_0^t \nabla f(Z_{s^-}).b_Z(s,Z_{s-})\,ds+ \int_0^t \nabla f(Z_{s^-}).a_Z(s,Z_{s-}) dW_s\\[0.1cm]
  &+&\frac{1}{2}\int_0^t {\rm tr}\left[\nabla^2 f(Z_{s-}){}^ta_Z(s,Z_{s-})a_Z(s,Z_{s-})\right]\,ds+\int_0^t\nabla f(Z_{s^-}).\psi_Z(s,Z_{s-},y)\ \,\tilde{N}(ds\,dy)\\[0.1cm]
   &+&\int_0^t\int_{\mathbb{R}^d}\left(f(Z_{s^-}+\psi_Z(s,Z_{s-},y))-f(Z_{s^-})-\psi_Z(s,Z_{s-},y).\nabla f(Z_{s^-})\right)\,N(ds\,dy)\\[0.1cm]
&=&\xi_0+\int_0^t \Big[\nabla f(Z_{s^-}).b_Z(s,Z_{s-})+\frac{1}{2}{\rm tr}\left[\nabla^2 f(Z_{s-}){}^ta_Z(s,Z_{s-})a_Z(s,Z_{s-})\right]\\[0.1cm]
&&\quad\quad\quad\quad\quad+\int_{\mathbb{R}^d}\left(f(Z_{s^-}+\psi_Z(s,Z_{s-},y))-f(Z_{s^-})-\psi_Z(s,Z_{s-},y).\nabla f(Z_{s^-})\right)\,\nu_Z(y)\,dy\Big]\,ds\\[0.1cm]
    &+& \int_0^t \nabla f(Z_{s^-}).a_Z(s,Z_{s-}) dW_s+\int_0^t\int_{\mathbb{R}^d}\left(f(Z_{s^-}+\psi_Z(s,Z_{s-},y))-f(Z_{s^-})\right)\,\tilde{N}(ds\,dy).
\end{eqnarray*}
Given Assumption \ref{C}, either
\begin{equation}
\forall R>0\:\forall t\in [0,T] \quad \inf_{\|z\| \leq R}\,\inf_{x\in\mathbb{R}^d,\,\|x\|=1} {}^tx.a_Z(t,z).x>0,
\end{equation}
then $(B_t)_{t\in[0,T]}$ defined by
\[
dB_t=\frac{\nabla f(Z_{t^-}).a_Z(t,Z_{t-})W_t}{\|\nabla f(Z_{t-})a_Z(t,Z_{t-})\|},
\]
 is a continuous local martingale with  $[B]_t=t$ thus  a Brownian motion,
or $a_Z\equiv 0$, then $\xi$ is a pure-jump semimartingale. Define $\mathcal{K}_t$
\begin{equation*}
  \mathcal{K}_t=\int_0^t\int \Psi_Z(s,Z_{s-},y)\,\tilde{N}(ds\:dy),
\end{equation*}
% Consider the process $\left(Z_t^1,\cdots,Z_t^{d-1},f(Z_t)\right)$
with $\Psi_Z(t,z,y)=\psi_Z(t,z,\kappa_z(y))$ where
\begin{eqnarray*}
  \kappa_z(y):\mathbb{R}^d&\to&\mathbb{R}^d\\[0.1cm]
    y&\to&(y_1,\cdots,y_{d-1},f(z+y)-f(z)).
\end{eqnarray*}
Since for any $z\in\mathbb{R}^d$, $\left|{\rm det}\nabla_y \kappa_z\right|(y)=\left|\frac{\partial}{\partial y_{d}} f(z+y)\right|>0$, one can define
\begin{equation*}
\kappa_z^{-1}(y)=(y_1,\cdots,y_{d-1},F_z(y))\quad F_z(y):\mathbb{R}^d\to\mathbb{R} \quad f(z+(y_1,\cdots,y_{d-1},F_z(y)))-f(z)=y_d.
\end{equation*}
Considering $\phi$ the inverse of $\psi$ that is $\phi(t,\psi_Z(t,z,y),z)= y$, define
\begin{equation*}
\Phi(t,z,y)=\phi(t,z,\kappa_z^{-1}(y)).
\end{equation*}
$\Phi$ corresponds to the inverse of $\Psi_Z$ and $\Phi$ is differentiable on $\mathbb{R}^d$ with image $\mathbb{R}^d$. Now, define
\begin{equation*}
\begin{split}
m(t,z,y)&=\left|{\rm det}\nabla_y\Phi(t,z,y)\right|\,\nu_Z(\Phi(t,z,y))\\[0.1cm]
&=\left|{\rm det}\nabla_y \phi(t,z,\kappa_z^{-1}(y))\right| \,\left|\frac{\partial f}{\partial y_{d}}(z+\kappa_z^{-1}(y))\right|^{-1}\,\nu_Z(\phi(t,z,\kappa_z^{-1}(y))).
\end{split}
\end{equation*}
One observes that
\begin{eqnarray*}
&&\int_0^t \int_{\|y\|>1} \left(1\wedge \|\Psi_Z(s,z,y)\|^2\right)\,\nu_Z(y)\,dy\,ds\ =\\[0.1cm]
&&\int_0^t \int_{\|y\|>1} \left(1\wedge \left(\psi^1(s,z,y)^2+\cdots+\psi^{d-1}(s,z,y)^2+(f(z+\psi_Z(s,z,y))-f(z))^2\right)\right)\,\nu_Z(y)\,dy\,ds\\[0.1cm]
&\leq&\int_0^t \int_{\|y\|>1}  \left(1\wedge \left(\psi^1(s,z,y)^2+\cdots+\psi^{d-1}(s,z,y)^2+\|\nabla f\|^2\|\psi_Z(s,z,y)\|^2\right)\right)\,\nu_Z(y)\,dy\,ds\\[0.1cm]
&\leq&\int_0^t \int_{\|y\|>1} \left( 1\wedge (2\vee \|\nabla f\|^2)\|\psi_Z(s,z,y)\|^2\right)\,\nu_Z(y)\,dy\,ds.\\[0.1cm]
\end{eqnarray*}
Given the condition (\ref{chap.mim.psi}), one may apply Lemma \ref{lemma2} and express $\mathcal{K}_t$ as
$\mathcal{K}_t=\int_0^t\int y\tilde{M}(ds\:dy)$
where $\tilde{M}$ is a compensated integer-valued random measure on $[0,T]\times\mathbb{R}^d$  with compensator $m(t,Z_{t-},y)\,dy\,dt$.\\[0.1cm]
Extracting the $d$-th component of $\mathcal{K}_t$, one obtains the semimartingale decomposition of $\xi_t$ on $[0,T]$
\begin{equation*}
\xi_t=\xi_0+\int_0^t \beta_s\,ds+ \int_0^t  \delta_s \,dB_s +\int_0^t \int u \,\tilde{K}(ds\:du),
\end{equation*}
where
\begin{equation*}
  \begin{cases}
    \beta_t&=\nabla f(Z_{t^-}).b_Z(t,Z_{t-})+\frac{1}{2}{\rm tr}\left[\nabla^2 f(Z_{t-}){}^ta_Z(t,Z_{t-})a_Z(t,Z_{t-})\right]\\[0.15cm]
    &+\int_{\mathbb{R}^d}\left(f(Z_{t^-}+\psi_Z(t,Z_{t-},y))-f(Z_{t^-})-\psi_Z(t,Z_{t-},y).\nabla f(Z_{t^-})\right)\,\nu_Z(y)\,dy,\\[0.15cm]
    \delta_t&=\|\nabla f(Z_{t-})a_Z(t,Z_{t-})\|,\\[0.15cm]
  \end{cases}
\end{equation*}
and $K$ is an integer-valued random measure on $[0,T]\times\mathbb{R}$ with compensator $k(t,Z_{t-},u)\,du\,dt$ defined for all $z\in\mathbb{R}^d$ via
\begin{equation*}
\begin{split}
k(t,z,u)&= \int_{\mathbb{R}^{d-1}} m(t,(y_1,\cdots,y_{d-1},u),z)\,dy_{1}\cdots\,dy_{d-1}\\[0.1cm]
&=\int_{\mathbb{R}^{d-1}} \left|{\rm det}\nabla_y\Phi(t,z,(y_1,\cdots,y_{d-1},u))\right|\,\nu_Z(\Phi(t,z,(y_1,\cdots,y_{d-1},u)))\,dy_{1}\cdots\,dy_{d-1},
\end{split}
\end{equation*}
and $\tilde{K}$ its compensated random measure. In particular for any $u>0$ (and analogously for $u<0$), %\in\mathbb{R}$
\begin{equation*}
k(t,z,[u,\infty[)=\int_{\mathbb{R}^d} 1_{\{f(z+\psi_Z(t,z,y))-f(z)\geq u\}}\,\nu_Z(y)\,dy.
\end{equation*}
\end{proof}

Given the semimartingale decomposition of $\xi_t$ in the form
(\ref{classeJ}), we may now construct the Markovian projection of
$\xi$  as follows.
\begin{theorem}\label{th.fonction.markov.process}
Assume that
\begin{itemize}
  \item the coefficients $(b_Z,a_Z,m_Z)$ satisfy Assumptions
\ref{A1}, \ref{A2} and \ref{C},
  \item the Markov process $Z$  has a transition density $q_{t}(.)$ which is
continuous on $\mathbb{R}^d$ uniformly in $t\in[0,T]$, and $t
\mapsto q_{t}(z)$ is right-continuous on $[0,T[$, uniformly in
$z\in\mathbb{R}^d$.
  \item $f\in C^2_b(\mathbb{R}^d,\mathbb{R})$ such that
\begin{equation*}
\forall (z_1,\cdots,z_{d-1})\in \mathbb{R}^{d-1},\quad u\mapsto f(z_1,\ldots,z_{d-1}, u)\:\,\mathrm{is\,a\,} \mathcal{C}^1(\mathbb{R},\mathbb{R})-{\rm diffeomorphism}.
\end{equation*}
%\begin{equation*}
%\exists 1\leq i_0\leq d \quad
% \forall z\in\mathbb{R}^d, \quad \frac{\partial f}{\partial z_{d}}(z)\ne 0,\quad\quad f(\mathbb{R}^d)=\mathbb{R}.
%\end{equation*}
\end{itemize}
Define,  for  $w\in \mathbb{R},t\in[0,T],$
\begin{equation}\label{new.paramet.markov}
\begin{split}
b(t,w)&=\frac{1}{c(w)}\,\int_{\mathbb{R}^{d-1}} \Big[\nabla f(.).b_Z(t,.)+\frac{1}{2}{\rm tr}\left[\nabla^2 f(.){}^ta_Z(t,.)a_Z(t,.)\right]\\[0.1cm]
 &\quad\quad+\int_{\mathbb{R}^d}\left(f(.+\psi_Z(t,.,y))-f(.)-\psi_Z(t,.,y).\nabla f(.)\right)\,\nu_Z(y)\,dy,\Big](z_1,\cdots,z_{d-1},w)\\[0.1cm]
&\quad\quad\quad\quad\quad\quad\quad\quad\quad\times\frac{q_t(z_1,\cdots,z_{d-1},F(z_1,\cdots,z_{d-1},w))}{\left|\frac{\partial f}{\partial z_{d}}\right|(z_1,\cdots,z_{d-1},F(z_1,\cdots,z_{d-1},w))},\\[0.cm]
 \sigma(t,w)&=\frac{1}{\sqrt{c(w)}}\,\Big[\int_{\mathbb{R}^{d-1}} \|\nabla f(.)a_Z(t,.)\|^2(z_1,\cdots,z_{d-1},w)\\[0.1cm]
&\quad\quad\quad\quad\quad\quad\quad\quad\quad\,\times\frac{q_t(z_1,\cdots,z_{d-1},F(z_1,\cdots,z_{d-1},w))}{\left|\frac{\partial f}{\partial z_{d}}\right|(z_1,\cdots,z_{d-1},F(z_1,\cdots,z_{d-1},w))}\Big]^{1/2},\\[0.1cm]
j(t,[u,\infty[,w)&=\frac{1}{c(w)}\,\int_{\mathbb{R}^{d-1}}\left(\int_{\mathbb{R}^d} 1_{\{f(.+\psi_Z(t,.,y))-f(.)\geq u\}}(z_1,\cdots,z_{d-1},w)\,\nu_Z(y)\,dy\right)\\[0.1cm]
&\quad\quad\quad\quad\quad\quad\quad\,\times\frac{q_t(z_1,\cdots,z_{d-1},F(z_1,\cdots,z_{d-1},w))}{\left|\frac{\partial f}{\partial z_{d}}(z_1,\cdots,z_{d-1},F(z_1,\cdots,z_{d-1},w))\right|},
\end{split}
\end{equation}
for $u>0$ (and analogously for $u<0$),
with
\begin{equation*}
c(w)=\int_{\mathbb{R}^{d-1}} dz_1...dz_{d-1}\frac{q_t(z_1,\cdots,z_{d-1},F(z_1,\cdots,z_{d-1},w))}{\left|\frac{\partial f}{\partial z_{d}}(z_1,\cdots,z_{d-1},F(z_1,\cdots,z_{d-1},w))\right|}.
\end{equation*}
Then the stochastic differential equation
\begin{equation}
  \begin{split}
    X_t&=\xi_0+\int_0^t b(s,X_s)\,ds+\int_0^t \sigma(s,X_s)\,dB_s\\[0.1cm]
    &+\int_0^t\int_{\|y\|\leq 1}y \,\tilde{J}(ds\, dy)+ \int_0^t\int_{\|y\|> 1}y \,J(ds\, dy),
  \end{split}
  \end{equation}
where  $(B_t)$ is a Brownian motion, $J$ is an integer-valued
  random measure on $[0,T]\times\mathbb{R}$ with compensator
  $j(t,du,X_{t-})\,dt$, $\tilde{J}=J-j$, admits a weak solution  $((X_t)_{t\in[0,T]},\mathbb{Q}_{\xi_0})$, unique in law,
whose  marginal distributions  mimick those of $\xi$:
  $$\forall t\in[0,T],\quad X_t\overset{\underset{\mathrm{d}}{}}{=}\xi_t. $$
Under $\mathbb{Q}_{\xi_0}$, $X$ is a   Markov process with infinitesimal
generator $\mathcal{L}$ given by
  \begin{equation*}
   \begin{split}
\forall g\in\mathcal{C}_0^\infty(\mathbb{R})\quad \mathcal{L}_tg(w)&=b(t,w)g'(w)+\frac{\sigma^2(t,w)}{2}\,g''(w)\\[0.1cm]
      &+\int_{\mathbb{R}^d}[g(w+u)-g(w)-ug'(w)]j(t,du,w).
   \end{split}
  \end{equation*}
\end{theorem}
Before proving Theorem \ref{th.fonction.markov.process}, we start
with an useful Lemma, which will be of importance.
\begin{lemma}\label{lemma.cond.law.good.case}
Let $Z$ be a $\mathbb{R}^d$-valued random variable with density $q(z)$ and $f\in\mathcal{C}^1(\mathbb{R}^d,\mathbb{R})$ such that
\begin{equation*}
\forall (z_1,\cdots,z_{d-1})\in \mathbb{R}^{d-1},\quad u\mapsto f(z_1,\ldots,z_{d-1}, u)\:\,\mathrm{is\,a\,} \mathcal{C}^1(\mathbb{R},\mathbb{R})-{\rm diffeomorphism}.
\end{equation*}
%\begin{equation}
%\exists 1\leq i_0\leq d \quad
% \forall z\in\mathbb{R}^d, \quad \frac{\partial f}{\partial z_{d}}(z)\ne 0,\quad\quad f(\mathbb{R}^d)=\mathbb{R}.
%\end{equation}
%For simplicity, we shall assume that $i_0=d$.\\
Define the function $F:\mathbb{R}^{d}\to\mathbb{R}$ such that
%\begin{equation}
 $ f(z_1,\cdots,z_{d-1},F(z))=z_{d}$.
%\end{equation}
Then for any measurable function $g:\mathbb{R}^d\to\mathbb{R}$  such
that $\mathbb{E}\left[|g(Z)|\right]<\infty$ and  any $w\in
\mathbb{R},$
\begin{eqnarray*}
&&\mathbb{E}\left[g(Z)|f(Z)=w\right]\\[0.1cm]
&=&\frac{1}{c(w)}\,\int_{\mathbb{R}^{d-1}} dz_1...dz_{d-1} g(z_1,\cdots,z_{d-1},w)\frac{q(z_1,\cdots,z_{d-1},F(z_1,\cdots,z_{d-1},w))}{\left|\frac{\partial f}{\partial z_{d}}(z_1,\cdots,z_{d-1},F(z_1,\cdots,z_{d-1},w))\right|},
\end{eqnarray*}
with
\begin{equation*}
c(w)=\int_{\mathbb{R}^{d-1}} dz_1...dz_{d-1}\frac{q(z_1,\cdots,z_{d-1},F(z_1,\cdots,z_{d-1},w))}{\left|\frac{\partial f}{\partial z_{d}}(z_1,\cdots,z_{d-1},F(z_1,\cdots,z_{d-1},w))\right|}.
\end{equation*}
%Then the marginal law $j(y)$ issued from the conditioning with respect to $\{f(Z)=y\}$ writes:
%\begin{equation}
%j(y)= \int_{\mathbb{R}^{d-1}} 1_{\{f(\mathbb{R}^{d})\}}(y)\,\left[\left|\frac{\partial f}{\partial z_{d}}\right|^{-1}\,p\right](z_1,\cdots,z_{d-1},F(z_1,\cdots,z_{d-1},y)).
%\end{equation}
\end{lemma}
\begin{proof}
Consider the d-dimensional random variable $\kappa(Z)$, where $\kappa:\mathbb{R}^d\mapsto\mathbb{R}^d $ is given by
\begin{equation*}
\kappa(z)=\left(z_1,\cdots,z_{d-1},f(z)\right).
\end{equation*}
\begin{equation*}
(\nabla_z\kappa)=\left(
\begin{array}{cccc}
1 & 0 &0 & 0\\[0.1cm]
\vdots & \ddots & \vdots& \vdots\\[0.1cm]
0& \cdots & 1 & 0\\[0.1cm]
\frac{\partial f}{\partial z_1}&\cdots&\frac{\partial f}{\partial z_{d-1}}&\frac{\partial f}{\partial z_d}
\end{array}
\right),
\end{equation*}
so t $|\mathrm{det}(\nabla_z\kappa)|(z)=\left|\frac{\partial f}{\partial z_d}(z)\right|>0$. Hence $\kappa$ is a $\mathcal{C}^1(\mathbb{R}^d,\mathbb{R}^d)$-diffeomorphism with inverse $\kappa^{-1}$.
\begin{equation*}
\kappa(\kappa^{-1}(z))=(\kappa^{-1}_1(z),\cdots,\kappa_{d-1}^{-1}(z),f(\kappa_1^{-1}(z),\cdots,\kappa_d^{-1}(z))=z.
\end{equation*}
For $1\leq i \leq d-1$, $\kappa_i^{-1}(z)=z_i$ and $f(z_1,\cdots,z_{d-1},\kappa_d^{-1}(z))=z_d$ that is $\kappa_d^{-1}(z)=F(z)$. Hence
\begin{equation*}
\kappa^{-1}(z_1,\cdots,z_d)=(z_1,\cdots,z_{d-1},F(z)).
\end{equation*}
Define $q_{\kappa}(z)\,dz$ the inverse image of the measure $q(z)\,dz$ under the partial map $\kappa$ by
\begin{eqnarray*}
q_{\kappa}(z)&=&1_{\{\kappa(\mathbb{R}^d)\}}(z)\,|\mathrm{det}(\nabla_z \kappa^{-1})|(z)\,q(\kappa^{-1}(z))\\[0.1cm]
&=&1_{\{\kappa(\mathbb{R}^d)\}}(z)\left|\frac{\partial f}{\partial z_{d}}\right|^{-1}(z_1,\cdots,z_{d-1},F(z))\,q(z_1,\cdots,z_{d-1},F(z)).\\[0.1cm]
\end{eqnarray*}
$q_{\kappa}(z)$ is the density of $\kappa(Z)$. So, for any $w\in f(\mathbb{R}^d)=\mathbb{R}$,
\begin{eqnarray*}
&&\mathbb{E}\left[g(Z)|f(Z)=w\right]\\[0.1cm]
&=&\int_{\mathbb{R}^{d-1}}\mathbb{E}\left[g(Z)|\kappa(Z)=(z_1,\cdots,z_{d-1},w)\right]\,dz_1,\cdots,dz_{d-1}\\[0.1cm]
&=&\frac{1}{c(w)}\int_{\mathbb{R}^{d-1}} dz_1...dz_{d-1}g(z_1,\cdots,z_{d-1},w)\frac{q(z_1,\cdots,z_{d-1},F(z_1,\cdots,z_{d-1},w))}{\left|\frac{\partial f}{\partial z_{d}}\right|(z_1,\cdots,z_{d-1},F(z_1,\cdots,z_{d-1},w))},
\end{eqnarray*}
with
\begin{equation*}
c(w)=\int_{\mathbb{R}^{d-1}} dz_1...dz_{d-1}\frac{q(z_1,\cdots,z_{d-1},F(z_1,\cdots,z_{d-1},w))}{\left|\frac{\partial f}{\partial z_{d}}(z_1,\cdots,z_{d-1},F(z_1,\cdots,z_{d-1},w))\right|}.
\end{equation*}
\end{proof}

\begin{proof}[Proof of Theorem \ref{th.fonction.markov.process}]
Let us show that if $(b_Z,a_Z,m_Z)$ satisfy Assumptions \ref{A1}, \ref{A2} and \ref{C} then the triplet $(\delta_t,\beta_t,k(t,Z_{t-},u))$ satisfies the assumptions of Theorem \ref{th1}. Then given Proposition \ref{semi.decomp.f.de.markov}, one may build in an explicit manner the Markovian Projection of $\xi_t$.

First, note that $\beta_t$ and $\delta_t$ satisfy Assumption \ref{H1} since $b_Z(t,z)$ and $a_Z(t,z)$ satisfy Assumption \ref{A1} and $\nabla f$ and $\nabla^2 f$ are bounded.

%%%%%%%%%%%%%%%%%%%%%%%%%%%
One observes that if $m_Z$ satisfies Assumption \ref{A1}, then the equality (\ref{mvspsi}) implies that $\psi_Z$ and $\nu_Z$ satisfies:
\begin{equation}\label{H3bb}
 \exists K_2>0 \,\forall t\in[0,T]\:\forall z\in\mathbb{R}^d\:\int_0^t \int_{\{\|y\|\geq 1\}} \left(1\wedge \|\psi_Z(s,z,y)\|^2\right)\,\nu_Z(y)\,dy\,ds <K_2.
\end{equation}
%Given the condition (\ref{H3bb})
%\begin{eqnarray*}
%&&\int_0^t \int_{\|y\|>1} \left(1\wedge \|\Psi_Z(s,z,y)\|^2\right)\,\nu_Z(y)\,dy\,ds\ =\\[0.1cm]
%&&\int_0^t \int_{\|y\|>1} \left(1\wedge \left(\psi^1(s,z,y)^2+\cdots+\psi^{d-1}(s,z,y)^2+(f(z+\psi_Z(s,z,y))-f(z))^2\right)\right)\,\nu_Z(y)\,dy\,ds\%\[0.1cm]
%&\leq&\int_0^t \int_{\|y\|>1}  \left(1\wedge \left(\psi^1(s,z,y)^2+\cdots+\psi^{d-1}(s,z,y)^2+\|\nabla f\|^2\|\psi_Z(s,z,y)\|^2\right)\right)\,\nu_Z(y)\,dy\,ds\\[0.1cm]
%&\leq&\int_0^t \int_{\|y\|>1} \left( 1\wedge (2\vee \|\nabla f\|^2)\|\psi_Z(s,z,y)\|^2\right)\,\nu_Z(y)\,dy\,ds\\[0.1cm]
%\end{eqnarray*}
%is bounded.
%%%%%%%%%%%%%%%%%%%%%%%%%%%%%%
Hence,
\begin{eqnarray*}
\int_0^t\int \left(1\wedge \|u\|^2\right)\,k(s,Z_{s-},u)\,du\,ds
&=&\int_0^t\int \left(1\wedge |f(Z_{s^-}+\psi_Z(s,Z_{s-},y))-f(Z_{s^-})|^2\right)\,\nu_Z(y)\,dy\,ds\\[0.1cm]
&\leq&\int_0^t\int \left(1\wedge \|\nabla f\|^2 \|\psi_Z(s,Z_{s-},u)\|^2\right) \,\nu_Z(y)\,dy\,ds,
\end{eqnarray*}
is bounded and $k$ satisfies Assumption \ref{H2}.\\[0.1cm]
As argued before, one sees that if $a_Z$ is non-degenerate then $\delta_t$ is. In the case $\delta_t\equiv 0$, for $t\in [0,T[,R>0,z\in B(0,R)$ and $g\in\mathcal{C}_0^0(\mathbb{R})\geq 0$, denoting $C$ and $K_{T}>0$ the constants in
Assumption \ref{C},
\begin{eqnarray*}
&&k(t,z,u)\\[0.1cm]
&=&\int_{\mathbb{R}^{d-1}} \left|{\rm det}\nabla_y\Phi(t,z,(y_1,\cdots,y_{d-1},u))\right|\,\nu_Z(\Phi(t,z,(y_1,\cdots,y_{d-1},u)))\,dy_1\,\cdots\, dy_{d-1}\\[0.1cm]
&=&\int_{\mathbb{R}^{d-1}} \left|{\rm det}\nabla_y \phi(t,z,\kappa_z^{-1}(y_1,\cdots,y_{d-1},u))\right|\,\left|\frac{\partial f}{\partial y_{d}}(z+\kappa_z^{-1}(y_1,\cdots,y_{d-1},u))\right|^{-1}\\[0.1cm]
&&\quad\quad\quad\quad\quad\quad \quad\quad\quad \quad\quad\quad \nu_Z(\phi(t,z,\kappa_z^{-1}(y_1,\cdots,y_d,u)))\,dy_1\,\cdots\, dy_{d-1}\\[0.1cm]
&\geq& \int_{\mathbb{R}^{d-1}} \left|\frac{\partial f}{\partial y_{d}}(z+\kappa_z^{-1}(y_1,\cdots,y_{d-1},u))\right|^{-1} \frac{C}{\|\kappa_z^{-1}(y_1,\cdots,y_{d-1},u)\|^{d+\beta}}\,dy_1\,\cdots\, dy_{d-1}\\[0.1cm]
&=&\int_{\mathbb{R}^{d-1}}\frac{C}{\|(y_1,\cdots,y_{d-1},u)\|^{d+\beta}}\,dy_1\,\cdots\,dy_{d-1}\\[0.1cm]
&=&\frac{1}{|u|^{d+\beta}}\int_{\mathbb{R}^{d-1}}\frac{C}{\|(y_1/u,\cdots,y_{d-1}/u,1)\|^{d+\beta}}\,dy_1\,\cdots\,dy_{d-1}= C'\frac{1}{|u|^{1+\beta}},
\end{eqnarray*}
with $C'=\int_{\mathbb{R}^{d-1}} C\,\|(w_1,\cdots,w_{d-1},1)\|^{-1}\,dw_1\,\cdots\,dw_{d-1}$.\\[0.1cm]
%  &&  \quad\quad \quad n_Z(\phi(t,z,y))|{\rm det}\nabla\phi(t,z,y)|^{-1}-\frac{C}{\|y\|^{d+\beta}}\geq 0\\[0.1cm]
%  && \quad\,\exists K'_{T,R}>0, \int_{\{\|y\|\leq 1\}}\|y\|^{\beta}\,\left(n_Z(\phi(t,z,y))|{\rm det}\nabla\phi(t,z,y)|^{-1}-\frac{C}{\|y\|^{d+\beta}}\right)\,dy\leq K'_{T,R} \\
Similarly
\begin{eqnarray*}
&& \int \left(1\wedge |u|^{\beta}\right)\,\left(k(t,z,u)-\frac{C'}{|u|^{1+\beta}}\right)\,du\\[0.1cm]
&=&  \int \left(1\wedge |u|^{\beta}\right)\,\int_{\mathbb{R}^{d-1}} \,\left|\frac{\partial f}{\partial y_{d}}(z+\kappa_z^{-1}(y_1,\cdots,y_{d-1},u))\right|^{-1} \\[0.1cm]
&&\quad \quad\Big[ \left|{\rm det}\nabla_y \phi(t,z,\kappa_z^{-1}(y_1,\cdots,y_{d-1},u))\right|\,\nu_Z(\phi(t,z,\kappa_z^{-1}(y_1,\cdots,y_{d-1},u)))\\[0.1cm]
&&\quad\quad \quad\quad\quad \quad-\frac{C}{\|\kappa_z^{-1}(y_1,\cdots,y_{d-1},u)\|^{d+\beta}}\Big]\,dy_1\,\cdots\, dy_{d-1}\,du\\[0.1cm]
&=& \int_{\mathbb{R}^{d}} \left(1\wedge |f(z+(y_1,\cdots,y_{d-1},u))-f(z)|^{\beta}\right)\\[0.1cm]
&&\quad\quad \quad\quad\Big(\left|{\rm det}\nabla_y \phi(t,z,(y_1,\cdots,y_{d-1},u))\right|\,\nu_Z(\phi(t,z,(y_1,\cdots,y_{d-1},u)))\\[0.1cm]
&&\quad\quad \quad\quad\quad \quad-\frac{C}{\|(y_1,\cdots,y_{d-1},u)\|^{d+\beta}}\Big)\,dy_1\,\cdots\, dy_{d-1}\,du\\[0.1cm]
&\leq& \int_{\mathbb{R}^{d}} \left(1\wedge \|\nabla f\|\|(y_1,\cdots,y_{d-1},u)\|^{\beta}\right)\nonumber\\[0.1cm]
&&\Big(\left|{\rm det}\nabla_y \phi(t,z,(y_1,\cdots,y_{d-1},u))\right|\,\nu_Z(\phi(t,z,(y_1,\cdots,y_{d-1},u)))\\[0.1cm]
&&-\frac{C}{\|(y_1,\cdots,y_{d-1},u)\|^{d+\beta}}\Big)\,dy_1\,\cdots\,
dy_{d-1}\,du
\end{eqnarray*}
is also bounded.
Similar arguments would show that
\begin{eqnarray*}
 && \lim_{\epsilon\to 0} \int_{|u|\leq \epsilon} |u|^{\beta}\,\left(k(t,Z_{t-},u)-\frac{C\,}{|u|^{1+\beta}}\right)\,du=0\:\mathrm{a.s.} \\[0.1cm]
          &\mathrm{and}& \lim_{R\to\infty} \int_0^T  k\left(t,Z_{t-},\{|u|\geq R\}\right)\,dt=0\:\mathrm{a.s.} ,
\end{eqnarray*}
since this essentially hinges on the fact that $f$ has bounded derivatives.

Applying Lemma \ref{lemma.cond.law.good.case}, one can compute explicitly
the conditional expectations in \eqref{new_param}. For example,
\begin{eqnarray*}
b(t,w)&=&E[\beta_t|\xi_{t-}=w]=\int_{\mathbb{R}^{d-1}} \Big[\nabla f(.).b_Z(t,.)+\frac{1}{2}{\rm tr}\left[\nabla^2 f(.){}^ta_Z(t,.)a_Z(t,.)\right]\\[0.1cm]
 &&+\int_{\mathbb{R}^d}\left(f(.+\psi_Z(t,.,y))-f(.)-\psi_Z(t,.,y).\nabla f(.)\right)\,\nu_Z(y)\,dy,\Big](z_1,\cdots,z_{d-1},w)\\[0.1cm]
&&\quad\quad\quad\quad\times\frac{q_t(z_1,\cdots,z_{d-1},F(z_1,\cdots,z_{d-1},w))}{|\frac{\partial f}{\partial z_{d}}(z_1,\cdots,z_{d-1},F(z_1,\cdots,z_{d-1},w))|}.
\end{eqnarray*}
%the marginal law $j_t(w)$ issued from the conditioning with respect to $\{f(Z_{t-})=w\}$ writes
%\begin{equation*}
%j_t(w)= \int_{\mathbb{R}^{d-1}}1_{\{f(\mathbb{R}^{d})\}}(w)\left|\frac{\partial f}{\partial z_{d}}\right|^{-1}\,q_t(z_1,\cdots,z_{d-1},F(z_1,\cdots,z_{d-1},w)),
%\end{equation*}
with $F:\mathbb{R}^d\to\mathbb{R}$ defined by
$f(z_1,\cdots,z_{d-1},F(z))=z_{d}$. Furthermore, $f$ is $C^2$ with
bounded derivatives, $(b_Z,a_Z,\nu_Z)$ satisfy Assumption \ref{A1}.
Since $z\in\mathbb{R}^d\to q_t(z)$ is continuous in $z$ uniformly in
$t\in[0,T]$ and $t\in[0,T[\to q_t(z)$ is right-continuous in $t$ uniformly in
$z\in\mathbb{R}^d$, the same properties hold for $b$. Proceeding similarly, one can show that
that Assumption \ref{A2} holds for $\sigma$ and $j$ so Theorem
\ref{th1} may be applied to yield the result.
\end{proof}

%Define as in Theorem \ref{th1}
%\begin{equation*}
%\begin{split}
%b(t,w)&=\mathbb{E}\left[\beta_t|f(Z_{t-})=w\right],\\[0.1cm]
%\sigma(t,w)&=\mathbb{E}\left[\delta_t^2|f(Z_{t-})=w\right]^{1/2},\\[0.1cm]
%j(t,u,w)&=\mathbb{E}\left[k(t,Z_{t-},u)|f(Z_{t-})=w\right].
%\end{split}
%\end{equation*}

%%%%%%%%%%%%%%%%%%%%%%%%%%%%%%%%%%%%%%%%%%
%when one wants to characterize the yielding Markovian projection of $\xi_t$.

%%%%%%%%%%%%%%%%%%%%%%%%%%%%%%%%%%%%%%%%%%%%%%%%%%%%%%%%%%%%%%%%%%

%%%%%%%%%%%%%%%%%%%%%%%%%%%%%%%%%%%%%%%%%%%%
\subsection{Time changed  L\'evy processes}\label{timechange.sec}
Models based on
time--changed L\'evy processes  have been the focus of much recent
work, especially in mathematical finance \cite{carrgeman03}. Let
$L_t$ be a L\'evy process, $(b,\sigma^2,\nu)$ be its
characteristic triplet on $(\Omega,({\cal F}_t)_{t\geq 0},\mathbb{P})$, $N$  the Poisson random measure representing the jumps of $L$ and $(\theta_t)_{t\geq 0}$  a locally bounded, strictly positive $\mathcal{F}_t$-adapted  cadlag process.
 The process
\begin{equation*}
\xi_t= \xi_0 + L_{\Theta_t} \qquad \Theta_t=\int_0^t \theta_s ds.
\end{equation*}
 is called a time-changed L\'evy process where $\theta_t$ is  interpreted as the rate of time change.
\begin{theorem}[Markovian projection of time-changed L\'evy processes]\label{th.time.changed.levy}
Assume that $(\theta_t)_{t\geq 0}$ is bounded from above and away from zero:
\begin{equation}\label{chap1.encadrement}
\begin{split}
 \exists K,\epsilon>0, \forall t\in[0,T],\:\: K \geq\theta_t\geq \epsilon\qquad \mathrm{a.s.}
\end{split}
\end{equation}
and that there exists  $\alpha:[0,T]\times \mathbb{R}\mapsto \mathbb{R}$ such that
$$ \forall t\in[0,T],\quad\forall z\in\mathbb{R},\quad\quad\alpha(t,z)=E[\theta_t| \xi_{t-}=z],$$
where $\alpha(t,.)$ is continuous  on
$\mathbb{R}$, uniformly in $t\in [0,T]$ and, for all $z$ in $\mathbb{R}$, $\alpha(.,z)$ is right-continuous in $t$ on $[0,T[$.
\begin{eqnarray*}
  &\mathrm{If\  either}& (i)\quad \sigma>0 \\[0.1cm]
  &{\rm or} & (ii) \quad \sigma\equiv 0\quad\mathrm{and}\:\exists\beta\in]0,2[, c,K'>0,\,\mathrm{and}\:\mathrm{a\,measure}\\[0.1cm]
&& \nu^\beta(dy)\:\mathrm{such\,that} \quad \nu(dy)=\nu^\beta(dy)+\frac{c}{|y|^{1+\beta}}\,dy,  \\[0.1cm]
  &&\quad \int \left(1\wedge |y|^{\beta}\right)\,\nu^\beta(dy) \leq K',\quad \lim_{\epsilon\to 0} \int_{|y|\leq \epsilon} |y|^{\beta}\,\nu^\beta(dy)=0,
\end{eqnarray*}
then
\begin{itemize}\item $(\xi_t)$ has the same marginals as $(X_t)$ on $[0,T]$, defined as the weak
solution of
\begin{equation*}
  \begin{split}
    X_t = \xi_0+\int_0^t \sigma  \sqrt{\alpha(s,X_{s-})} dB_s + \int_0^t b \alpha(s,X_{s-}) ds  \\[0.1cm]
    + \int_0^t\int_{|z|\leq 1} z \tilde{J}(ds\,dz) + \int_0^t\int_{|z|>1} z J(ds\,dz),
 \end{split}
\end{equation*}
where $B_t$ is a real-valued brownian motion, $J$ is an integer-valued random measure on $[0,T]\times\mathbb{R}$ with compensator
$\alpha(t,X_{t-})\,\nu(dy)\,dt$.
\item The marginal distribution $ p_{t}$ of $\xi_t$ is the unique solution of the forward equation:
\begin{equation*}\frac{\partial p_{t}}{\partial
  t}= \mathcal{L}^{\star}_t.\,p_{t},
\end{equation*}
where, $\mathcal{L}^*_t$ is given by
\begin{eqnarray}
  \mathcal{L}^{\star}_t g(x) &=&-b \frac{\partial}{\partial x}\left[\alpha(t,x)g(x)\right]+\frac{\sigma^2}{2} \frac{\partial^2}{\partial x^2}[\alpha^2(t,x)g(x)]\nonumber\\[0.1cm]
&+& \int_{\mathbb{R}^d}\nu(dz)\left[g(x-z)\alpha(t,x-z)-g(x)\alpha(t,x)-1_{\|z\|\leq 1}z.\frac{\partial}{\partial x}[g(x)\alpha(t,x)]\right],\nonumber
\end{eqnarray}
with the given initial condition $p_0(dy)=\mu_0(dy)$ where $\mu_0$ denotes the law of $\xi_0$.
\end{itemize}
\end{theorem}
\begin{proof}
Consider the L\'evy-Ito decomposition of $L$:
 $$ L_t=bt+\sigma
W_t+  \int_0^t\int_{|z|\leq 1} z \tilde{N}(ds dz) +
\int_0^t\int_{|z|>1} z N(ds dz).$$
Then $\xi$ rewrites
\begin{equation*}
  \begin{split}
    \xi_t &= \xi_0+ \sigma  W(\Theta_t) +  b \Theta_t   \\[0.1cm]
    & +\int_0^{\Theta_t}\int_{|z|\leq 1} z \tilde{N}(ds\,dz) + \int_0^{\theta_t}\int_{|z|>1} z N(ds\,dz).
 \end{split}
 \end{equation*}
 $W(\Theta_t)$ is a continuous  martingale  starting from $0$, with quadratic variation $\Theta_t=\int_0^t \theta_s ds$. Hence, there exists $Z_t$ a Brownian motion, such that
$$W(\Theta_t)\mathop{=}^d \int_0^{t} \sqrt{\theta_s} dZ_s.$$
 Hence $\xi_t$ is the weak solution of :
\begin{equation*}
  \begin{split}
    \xi_t &= \xi_0+ \int_0^t \sigma \sqrt{\theta_s}\,dZ_s +  \int_0^t b\theta_s\,ds   \\[0.1cm]
    & +\int_0^{t}\int_{|z|\leq 1} z\theta_s \,\tilde{N}(ds\,dz) + \int_0^{t}\int_{|z|>1} z\theta_s\, N(ds\,dz).
 \end{split}
 \end{equation*}
Using the notations of Theorem \ref{th1},
\begin{equation*}
  \beta_t=b\,\theta_t,\quad
    \delta_t=\sigma\,\sqrt{\theta_t},\quad m(t,dy)=\theta_t\,\nu(dy).
\end{equation*}
Given the conditions (\ref{chap1.encadrement}), one simply observes that
\begin{equation*}
\forall (t,z)\in[0,T]\times\mathbb{R}, \quad \epsilon\leq \alpha(t,z)\leq K.
\end{equation*}
Hence Assumptions \ref{H1}, \ref{H2} and \ref{H3} hold for $(\beta,\delta,m)$. Furthermore,
\begin{equation*}
    \begin{split}
      b(t,.)&=\mathbb{E}\left[\beta_t|\xi_{t^-}=.\right]=b\,\alpha(t,.),\\[0.1cm]
      \sigma(t,.)&=\mathbb{E}\left[\delta_t^2|\xi_{t^-}=.\right]^{1/2}=\sigma\,\sqrt{\alpha(t,.)},\\[0.1cm]
      n(t,B,.)&=\mathbb{E}\left[m(t,B)|\xi_{t^-}=.\right]=\alpha(t,.)\nu(B),
    \end{split}
  \end{equation*}
are all continuous on $\mathbb{R}$ uniformly in $t$ on $[0,T]$ and for all $z\in\mathbb{R}$, $\alpha(.,z)$ is right-continuous on $[0,T[$. One may apply Theorem \ref{th1} yielding the result.
\end{proof}

 The impact of   the random time change on the marginals can be captured by making the characteristics state dependent
$$ (\ b \alpha(t,X_{t-}),\sigma^2 \alpha(t,X_{t-}),\alpha(t,X_{t-}) \nu\ )  $$
 by introducing the {\it same} adjustment factor $\alpha(t,X_{t-})$  to the drift, diffusion coefficient and L\'evy measure.
In particular if $\alpha(t,x)$ is affine in $x$ we get an affine process \cite{duffie2000taa} where the affine dependence
of the characteristics  with respect to the state  are restricted to be colinear, which is rather restrictive.
This remark shows that time-changed L\'evy processes, which in principle allow for a wide variety of choices for $\theta$ and $L$,
may not be as flexible as apparently simpler affine models when it comes to reproducing marginal distributions.

\def\polhk#1{\setbox0=\hbox{#1}{\ooalign{\hidewidth
  \lower1.5ex\hbox{`}\hidewidth\crcr\unhbox0}}}

\end{document}